\documentclass[12pt, a4paper]{article}

\usepackage{amsmath, amssymb, amsthm, amsrefs}
\usepackage{caption}
\usepackage{enumitem}
\usepackage{graphicx}
\usepackage{geometry, color}

\captionsetup[figure]{format=plain, labelformat=simple, labelsep=period}
\geometry{left=25mm, right=25mm, top=30mm, bottom=30mm}

\numberwithin{equation}{section}

\theoremstyle{plain}
\newtheorem{thm}{Theorem}[section]

\newtheorem{lem}[thm]{Lemma}
\newtheorem{prop}[thm]{Proposition}
\newtheorem{exam}[thm]{Example}

\DeclareMathOperator{\diver}{div}
\DeclareMathOperator{\tr}{tr}

\def\ga{\alpha}
 
\def\gd{\delta}
\def\gD{\Delta}
\def\ge{\eta}
\def\gg{\gamma}
\def\gG{\Gamma} 
\def\gk{\kappa} 
\def\gl{\lambda}

\def\gO{\Omega} 
\def\gr{\rho}                 
\def\gs{\sigma}

\def\gt{\tau} 
\def\gth{\theta}

\def\ep{\varepsilon}     
 
\def\vth{\vartheta}

\def\cd{\cdot}
\def\lds{\ldots}

\def\tim{\times}
\def\otim{\otimes}

\def\wed{\wedge}

\def\sb{\subset}
\def\sm{\setminus}

\def\pl{\partial}
\def\inft{\infty}
\def\emp{\emptyset}

\def\ol{\overline}
\def\ul{\underline}

\def\til{\tilde}
\def\wtil{\widetilde}

\def\fr{\frac}
\def\cfr{\cfrac}

\def\R{\mathbb R}

\def\cD{\mathcal D}
\def\cS{\mathcal S}
\def\cI{\mathcal I}

\title{Asymptotic analysis for Hamilton-Jacobi equations with large drift term}

\author{Taiga Kumagai}

\date{}


\begin{document}

\maketitle

\begin{abstract}
We investigate the asymptotic behavior of solutions of Hamilton-Jacobi equations
with large drift term in an open subset of two-dimensional Euclidean space.
When the drift is given by $\ep^{-1} (H_{x_2}, -H_{x_1})$ of a Hamiltonian $H$, with $\ep > 0$,
we establish the convergence, as $\ep \to 0+$, of solutions of the Hamilton-Jacobi equations and
identify the limit of the solutions as the solution of systems of ordinary differential equations on a graph.
This result generalizes the previous one obtained by the author to the case where
the Hamiltonian $H$ admits a degenerate critical point and, as a consequence,
the graph may have segments more than four at a node.

\textit{Key Words and Phrases.} Singular perturbation, Hamilton-Jacobi equations, Large drift, Graphs.

2010 \textit{Mathematics Subject Classification Numbers.} 35B40, 49L25.
\end{abstract}

\section{Introduction}

We consider the boundary value problem for the Hamilton-Jacobi equation
\begin{align} \label{epHJ} 
\begin{cases} 
\gl u^\ep - \cfr{b \cd Du^\ep}{\ep} + G(x, Du^\ep) = 0 \ \ \ &\text{ in } \gO, \tag{$\mathrm{HJ}^\ep$} \\ 
u^\ep = g^\ep \ \ \ &\text{ on } \pl \gO,
\end{cases}
\end{align}
and investigate the asymptotic behavior, as $\ep \to 0+$, of the solution $u^\ep$ to \eqref{epHJ}.

In the above and henceforth, $\ep$ is a small positive parameter, $\gl$ is a positive constant,
$\gO$ is an open subset of $\R^2$ with boundary $\pl \gO$,
$G : \ol \gO \tim \R^2 \to \R$ and $g^\ep : \pl \gO \to \R$ are given functions,
$b : \R^2 \to \R^2$ is a given vector field, $u^\ep : \ol \gO \to \R$ is the unknown, and
$Du^\ep$ and $p \cdot q$ denote, respectively, the gradient of $u^\ep$ and the Euclidean inner product of $p, q \in \R^2$.
We give the vector field $b$ as a Hamilton vector field, that is, for a given Hamiltonian $H : \R^2 \to \R^2$,
\begin{equation*}
b = (H_{x_2}, -H_{x_1}),
\end{equation*}
where the subscript $x_i$ indicates the differentiation with respect to the variable $x_i$.

We are interested in the Hamiltonian flow with one degree of freedom
\begin{equation} \label{HS}
\dot X (t) = b(X(t)) \ \ \ \text{ and } \ \ \ X(0) = x \in \R^2, \tag{HS}
\end{equation}
and with its perturbed system 
\begin{equation} \label{control}
\dot X^\ep (t) = b(X^\ep(t)) + \ep \ga (t) \ \ \ \text{ and } \ \ \ X^\ep(0) = x \in \R^2, 
\end{equation}
where $\ga \in L^\inft (\R ; \R^2)$.  
Rescaling the time from $t$ to $t/\ep$ in \eqref{control}, we obtain
\begin{equation} \label{state}
\dot X^\ep (t) = \fr{1}{\ep} \, b(X^\ep(t)) + \ga (t) \ \ \ \text{ and } \ \ \ X^\ep (0) = x \in \R^2.
\end{equation}

The first equation of \eqref{epHJ} is the dynamic programming equation for the optimal control problem.
As is well-known, the viscosity solution $u^\ep$ of \eqref{epHJ} is identified with
the value function of the optimal control problem,
where the state equation, the discount factor, the pay-off at the exit time, and the running cost are given, respectively,
by \eqref{state}, $\gl$, $g^\ep$, and the function $L$, defined by
\begin{equation*}
L(x, \xi) = \sup_{p \in \R^2} \{ -\xi \cd p - G(x, p) \} \ \ \ \text{ for } (x, \xi) \in \ol \gO \tim \R^2.
\end{equation*}    
Thus, the investigation of the asymptotic behavior, as $\ep \to 0+$, of
the solutions of \eqref{epHJ} may be regarded in a broad sense as analyzing
the behavior, as $\ep \to 0+$, of the solutions of \eqref{state}, with ``optimal'' $\ga$.

In a spirit similar to the above,
but with a stochastic perturbation in place of a ``perturbation by optimal control'',  
Freidlin and Wentzell in \cite{FW} has initiated the study of a stochastic perturbation for \eqref{HS} and
established a convergence theorem for the solutions of
the linear second-order uniformly elliptic partial differential equation (pde, for short) 
\begin{equation}\label{model}
-\fr{b \cd Du^\ep}{\ep}- \gD u^\ep = f(x), 
\end{equation}
with a continuous function $f$ on $\ol \gO$.
Here, a similarity of the elliptic pde above to the pde in \eqref{epHJ} is that 
$\gl > 0$ and $G(x,Du^\ep)$ in \eqref{epHJ} correspond, respectively,
to $\gl=0$ and $-\Delta u^\ep-f$ in \eqref{model}. 
Regarding the stochastic perturbation,
Ishii and Souganidis in \cite{IS} has established a convergence theorem similar to that in \cite{FW}, 
by a pure pde-techniques, which covers a fairly general linear second-order degenerate elliptic pdes.

Motivated by the developments (\cite{FW, IS}) in stochastic perturbations for \eqref{HS}, 
the author in \cite{K} has recently established a convergence result for Hamilton-Jacobi equations \eqref{epHJ}
by using viscosity solution techniques such as the perturbed test function method and
representations of solutions as value functions in optimal control. 
A typical Hamiltonian $H$ studied in \cite{FW, IS, K} is given by 
\begin{equation*} \label{2-well}
H(x_1, x_2) = x_1^2 + \fr{1}{2} (x_2^2 - 1)^2 - \fr{1}{2}, 
\end{equation*}
whose graph has the shape of the so-called double-well potential, 
and it has three non-degenerate critical points at $(0, -1)$, $(0, 0)$ and $(0, 1)$.  
In this case, the limiting functions in the convergence results
either in stochastic perturbation or in perturbation by optimal control 
are characterized by systems of odes on graphs $\gG$ with one node and three edges, where,
roughly speaking, one of the edges corresponds to one of the potential wells,
another to the other potential well and the last to a finite tube above the potential wells.

Our main contribution in this article is to prove a convergence theorem
when the Hamiltonian $H$ has  degenerate critical points.  
The result is stated as Theorem \ref{main} below,
where the graph on which the limit function is defined has one node and
arbitrarily many edges depending on the Hamiltonian.

A simple example of such Hamiltonians is given by 
\begin{equation*} \label{exam}
H(x_1, x_2) = (x_1^2+x_2^2)^2-3x_1^2x_2+x_2^3.
\end{equation*}
We emphasize that most work on stochastic perturbation of Hamiltonian flows has studied
the case where Hamiltonian $H$ has only non-degenerate critical points.

Now, we mention that related problems have been considered 
in the context of Hamilton-Jacobi equations on networks or graphs.
In particular, the convergence results of approximated solutions
by fattening networks or graphs were established
in \cite{AT} for Hamilton-Jacobi equations in optimal control and
in \cite{LS} for non-convex Hamilton-Jacobi equations.

An interesting point of the result in \cite{K} is that
we have to treat a non-coercive Hamiltonian (that is, $-b(x)\cdot p/\ep + G(x, p)$) in \eqref{epHJ},
while very few authors have studied Hamilton-Jacobi equation with non-coercive Hamiltonian on networks or graphs. 
This difficulty due to lack of coercivity is resolved by taking the advantage that
the Hamiltonian $-b(x) \cd p/\ep + G(x, p)$ in \eqref{epHJ} is coercive in the direction orthogonal to $b(x)$.
See \cite{K} for details.

The graphs considered in Hamilton-Jacobi equations on networks or graphs,
in general, have many number of segments at a node.
However, in perturbation analysis of Hamiltonian flows as discussed above, 
when Hamiltonian $H$ has only non-degenerate critical points,
the number of segments at a node of graph $\Gamma$ is at most ``four'' (see, for example, \cite{F})
because, in this case, $H$ can be represented only by
\begin{equation} \label{saddle}
H(x_1, x_2) = x_1^2 - x_2^2
\end{equation}
in a neighborhood of a saddle point, which is corresponding to a node on $\Gamma$.

The argument in \cite{K} depends heavily on the formula \eqref{saddle}, which 
allows us to use an explicit formula of the solution of \eqref{HS} in a neighborhood of a saddle point. 
This is a main crucial point to establish our convergence theorem
since it is impossible to find a convenient explicit formula of solutions of \eqref{HS}
for general Hamiltonian $H$ with degenerate critical points. 
The idea to overcome the difficulty above is to use geometric integral formulas
for some quantities of the flow \eqref{HS} instead of solving \eqref{HS} explicitly.

This paper is organized as follows.
In the next section, we first present the assumptions on Hamiltonian $H$, 
typical examples of $H$, and the domain $\Omega$.
After these, we describe a basic existence and uniqueness proposition for \eqref{epHJ}
as well as the assumptions on the function $G$ throughout this paper, and
we finally state the main result (see Theorem \ref{main}).
In Section 3 divided into two parts,
we study some properties of functions in the odes in the limiting problem and subsolutions to the odes. 
Section 4 is devoted to the proof of Theorem \ref{main} along the argument in \cite{K}.
In Section 5, we present a sufficient condition, similar to that in \cite{K},
on the boundary data for the odes on the graph for which (G5) and (G6) hold.

Finally, we give a few of our notations.

\subsection*{Notation}

For $r > 0$, we denote by $B_r$ the open disc centered at the origin with radius $r$.
For $c, d \in \R$, we write $c \wed d = \min \{ c, d \}$.

\section{Preliminaries and Main result}

\subsection{The Hamiltonian}

Let $N \geq 3$.
We assume the following assumptions on the Hamiltonian $H$ throughout this paper.

\begin{itemize}
\item[(H1)] $H \in C^2 (\R^2)$ and $\lim_{|x| \to \inft} H(x) = \inft$.

\item[(H2)] $H$ has exactly $N$ critical points $z_i \in \R^2$, with $i \in \{ 0, \lds, N-1 \}$, and
                attains local minimum at $z_i$ and $i \in \{ 1, \lds, N-1 \}$.
\end{itemize}

Here and henceforth, we write
\begin{equation*}
\cI_0 = \{ 0, \lds ,N-1 \} \ \ \ \text{ and } \ \ \ \cI_1 = \{ 1, \ldots, N-1 \}.
\end{equation*}

For example, in the case where $N=4$, 
the graph of the Hamiltonian $H$ satisfying (H1) and (H2) is shaped like Fig. 2 below.

The number $N$ in (H2) coincides with
number of segments at a node of a graph arising in the limiting process.

To simplify the notation, we assume without loss of generality that
\begin{equation*}
z_0 = 0 := (0, 0) \ \ \ \text{ and } \ \ \ H(0) = 0.
\end{equation*}
We remark that, under assumptions (H1) and (H2),
in the case where $N \geq 4$, the origin is just a degenerate critical point of the Hamiltonian $H$, while,
in the case where $N = 3$, it may be a non-degenerate one.

\begin{itemize}
\item[(H3)] There exist constants $m \geq 0$ and $C > 0$, and 
		a neighborhood $V \sb \R^2$ of the origin such that, for any $i, j \in \{ 1, 2\}$,
                \begin{equation*}
		|H_{x_i x_j}(x)| \leq C|x|^m \ \ \ \text{ for all } x \in V.
                \end{equation*}
\end{itemize}

We note that assumption (H3) implies, by replacing $C > 0$ by a larger number if necessary, that
\begin{equation} \label{order-DH}
|DH(x)| \leq C|x|^{m+1} \ \ \ \text{ for all } x \in V,
\end{equation}
and
\begin{equation} \label{order-H}
|H(x)| \leq C|x|^{m+2} \ \ \ \text{ for all } x \in V.
\end{equation}

\begin{itemize}
\item[(H4)] There exist constants $n, c > 0$ such that $n < m+2$ and  
		\begin{equation*}
		c|x|^n \leq |DH(x)| \ \ \ \text{ for all } x \in V.
		\end{equation*}
\end{itemize}

Combining (H4) with \eqref{order-DH}, we see that $m+1\leq n$, and
with \eqref{order-H}, we get the relation
\begin{equation} \label{H-DH}
c_0 |H(x)|^\frac{n}{m+2} \leq |DH(x)| \ \ \ \text{ for all } x \in V
\end{equation}
for some $c_0 > 0$.

The following examples show that
conditions (H3) and (H4) are satisfied for wide range of Hamiltonians $H$.

\begin{figure}[t]
\begin{minipage}{0.5\hsize}
\centering
\input{N=3.tex}
\caption{$H$ $(N = 3)$}
\end{minipage}
\begin{minipage}{0.5\hsize}
\centering
\input{N=4.tex}
\caption{$H$ $(N = 4)$}
\end{minipage}
\end{figure}

\begin{exam} \label{H_3}
{\rm
Consider two Hamiltonians
\begin{equation*}
H_3 (x_1, x_2) = (x_1^2+ x_2^2)^2-x_1^2+x_2^2 \ \ \ \text{ and } \ \ \ \ H_3^o (x_1, x_2) = (x_1^2+x_2^2)^3-x_1^4+x_2^4.
\end{equation*}
It is obvious that $H_3$ and $H_3^o$ satisfy (H1).
By simple computations, we see that $H_3$ and $H_3^o$ satisfy (H3) and (H4) 
with, respectively, $(m, n)=(0, 1)$ and $(m, n)=(2, 3)$.
Both of the number of critical points are three, which consists of the origin; a saddle point, and,
respectively, $z_{1,2} = (\pm \sqrt{2}/2, 0)$ and $z_{1, 2} = (\pm \sqrt{6}/3, 0)$; local minimum points.
That is, (H2) holds with $N=3$.
The origin is a degenerate critical point of $H_3^o$, while it is a non-degenerate one of $H_3$.
The graphs of $H_3$ and $H_3^o$ are shaped like Fig. 1.
}
\end{exam}

\begin{exam} \label{H_4}
{\rm
Next, consider the Hamiltonian
\begin{equation*}
H_4 (x_1, x_2) = (x_1^2+x_2^2)^2-3x_1^2x_2+x_2^3.
\end{equation*}
It is easy to check that $H_4$ satisfies (H1)--(H4) with $(m, n)=(1, 2)$ and $N = 4$.
The critical points of $H_4$ are the origin; a degenerate saddle point, and
$z_1 = (3\sqrt{3}/8, 3/8)$, $z_2 = (-3\sqrt{3}/8, 3/8)$, and $z_3 = (0, -3/4)$; local minimum points.
The graph of $H_4$ is shaped like Fig. 2.
To understand the shape of $H_4$ well, 
we remark that $H_4$ can be represented in polar coordinates by
\begin{equation*}
\wtil H_4 (r, \gth)= r^4 - r^3 \sin 3\gth,
\end{equation*}
that is, the zero-level set of $H_4$ is the curve expressed by $r = \sin 3\gth$.
Indeed, if $x_1+ \, \mathrm{i} \, x_2 := re^{\, \mathrm{i} \, \gth}$, 
where $\mathrm{i}$ denotes the imaginary unit, then
\begin{equation*}
r^3 \sin 3\gth = r^3 \, \mathrm{Im} \, e^{3 \, \mathrm{i} \, \gth} = \mathrm{Im} \, r^3e^{3 \, \mathrm{i} \, \gth} 
                   = \mathrm{Im} \, (x_1+ \, \mathrm{i} \, x_2)^3 = 3x_1^2x_2-x_2^3.
\end{equation*}

}
\end{exam}

\begin{exam} \label{H_N}
{\rm
More generally, the Hamiltonian
\begin{equation*}
H_N (x_1, x_2) = (x_1^2+x_2^2)^{\fr{N}{2}} + \sum_{k=1}^{[N/2]} (-1)^k \binom{N-1}{k} x_1^{N-2k}x_2^{2k-1}
\end{equation*}
satisfies (H1)--(H4) with $(m, n)=(N-3, N-2)$ provided $N \geq 4$.
Here $[y]$ denotes the largest integer less than or equal to $y \in \R$.
Similarly to $H_4$ in Example \ref{H_4},
we see that the zero-level set of $H_N$ is the curve expressed by $r = \sin (N-1)\gth$
through the representation in polar coordinates by 
\begin{equation*}
\wtil H_N (r, \gth) = r^N - r^{N-1}\sin (N-1)\gth,
\end{equation*}
where
\begin{equation*}
r^{N-1} \sin (N-1)\gth = \sum_{k=1}^{[N/2]} (-1)^k \binom{N-1}{k} x_1^{N-2k}x_2^{2k-1}.
\end{equation*}
The critical points of $\wtil H_N$ are the origin and
\begin{equation*}
(r, \gth_i) = \left( \fr{N-1}{N}, \ \sin \fr{4i-3}{2} \, \pi \right) \ \ \ \text{ for } i \in \cI_1,
\end{equation*}
which are, respectively, corresponding to a saddle point and local minimum points of $H_N$.
} 
\end{exam}

\subsection{The domain}

Under assumptions (H1) and (H2), 
for any $h > 0$, the open set $\{ x \in \R^2 \mid H(x) < h \}$ is connected, and
the open set $\{ x \in \R^2 \mid H(x) < 0 \}$ consists of
$N-1$ connected components $D_1, \lds, D_{N-1}$ such that $z_i \in D_i$.

We choose the real numbers
\begin{equation*}
h_0 > 0 \ \ \ \text{ and } \ \ \ H(z_i) < h_i < 0 \ \ \ \text{ for } i \in \cI_1,
\end{equation*}
and set the intervals
\begin{equation*}
J_0 = (0, h_0) \ \ \ \text{ and } \ \ \ J_i = (h_i, 0) \ \ \ \text{ for } i \in \cI_1.
\end{equation*}

We put the open sets
\begin{equation*}
\gO_0 = \{ x \in \R^2 \mid H(x) \in J_0 \} \ \ \ \text{ and } \ \ \ 
\gO_i = \{ x \in D_i \mid H(x) \in J_i \} \ \ \ \text{ for } i \in \cI_1, 
\end{equation*}
and their ``outer" boundaries
\begin{equation*}
\pl_i \gO = \{ x \in \ol \gO_i \mid H(x) = h_i \} \ \ \ \text{ for } i \in \cI_0.
\end{equation*}

Finally, we introduce $\gO$ as the open connected set 
\begin{equation*}
\gO = \left( \bigcup_{i = 0}^{N-1} \gO_i \right) \cup \{ x \in \R^2 \mid H(x) = 0 \},
\end{equation*}
with the boundary
\begin{equation*}
\pl \gO = \bigcup_{i = 0}^{N-1} \pl_i \gO.
\end{equation*}

For example, the shapes of $\gO$ corresponding to $H$ in Figs. 1 and 2 are,
respectively, depicted in Figs. 3 and 4.

By (H1), the initial value problem \eqref{HS} admits
a unique global in time solution $X(t, x)$ such that
\begin{equation*} \label{C^1-X}
X, \dot X \in C^1 (\R \tim \R^2; \R^2).
\end{equation*}

As is well known, $H$ is a first integral for the system \eqref{HS}, that is,
\begin{equation*}
H(X(t, x)) = H(x) \ \ \ \text{ for all } (t, x) \in \R \tim \R^2. 
\end{equation*}

For $h \in \bar J_i$ and $i \in \cI_0$, 
we define the loops $c_i (h)$ by
\begin{equation*}
c_i (h) = \{ x \in \ol \gO_i \mid H(x) = h \}.
\end{equation*}
If we identify all points belonging to a loop $c_i (h)$, 
we obtain a graph $\gG$ consisting of $N$ segments parametrized by $J_0, \lds, J_{N-1}$.
For example, the graph $\gG$ corresponding to $\gO$ in Fig. 4 is shown in Fig. 5.

\begin{figure}[t]
\centering
\input{Omega-Gamma}
\caption{$\gG$ $(N = 4)$}
\end{figure}

It is not hard to check the following facts:
if $h \in J_i \cup \{ h_i \}$ and $i \in \cI_0$, then,
for any $\bar x_i \in c_i(h)$, the map $t \mapsto X(t, \bar x_i)$ is periodic and
\begin{equation*}
c_i (h) = \{ X(t, \bar x_i) \mid t \in \R \}.
\end{equation*}
If $i \in \cI_1$, then, for any $\bar x_i \in c_i(0) \sm \{ 0 \}$,
\begin{equation*}
c_i(0) = \{ 0 \} \cup \{ X(t, \bar x_i) \mid t \in \R \} \ \ \ \text{ and } \ \ \ \lim_{t \to \pm \inft} X(t, \bar x_i) = 0,
\end{equation*}
and
\begin{equation*}
c_i(0) = \pl D_i.
\end{equation*}
Moreover,
\begin{equation*}
c_0(0) = \{ x \in \R^2 \mid H(x) = 0 \} = \bigcup_{i \in \cI_1} c_i(0)
\end{equation*}
and
\begin{equation*}
c_0(0) = \pl D_0.
\end{equation*}

\begin{figure}[t]
\begin{minipage}{0.5\hsize}
\centering
\input{OmegaN=3.tex}
\caption{$\gO$ $(N = 3)$}
\end{minipage}
\begin{minipage}{0.5\hsize}
\centering
\input{OmegaN=4.tex}
\caption{$\gO$ $(N = 4)$}
\end{minipage}
\end{figure}

\subsection{The Hamilton-Jacobi equation}

We put the following assumptions (G1)--(G5) on $G$ and $g^\ep$ throughout this paper.

\begin{itemize}
\item[(G1)] $G \in C(\ol \gO \tim \R^2)$.

\item[(G2)] There exists a modulus $m$ such that
                \begin{equation*}
                |G(x, p) - G(y, p)| \leq m(|x - y|(1 + |p|)) \ \ \ \text{ for all } x, y \in \ol \gO \text{ and } p \in \R^2.
                \end{equation*}

\item[(G3)] For each $x \in \ol \gO$, the function $p \mapsto G(x, p)$ is convex on $\R^2$.

\item[(G4)] $G$ is coercive, that is, 
               \begin{equation*}
               G(x, p) \to \inft \ \ \ \text{ uniformly for } x \in \ol \gO \text{ as } |p| \to \inft.
               \end{equation*}
\end{itemize}

Assumption (G2) is a standard requirement to $G$ that
the comparison principle should hold for \eqref{epHJ}.
Under assumptions (G1), (G3), and (G4), there exist $\nu, M > 0$ such that
\begin{equation} \label{coercivity}
G(x, p) \geq \nu |p| - M \ \ \ \text{ for  all } (x, p) \in \ol \gO \tim \R^2.
\end{equation}

The following condition has the same role as compatibility conditions described in \cite{L},
which are used to ensure the continuity up to boundary of the value functions in optimal control.
That is, it guarantees the continuity up to the boundary of the function $u^\ep$ of the form \eqref{value} below and, 
hence, gives us the uniqueness of viscosity solutions of \eqref{epHJ}.
In what follows, we write $X^\ep (t, x, \ga)$ for the solution to \eqref{state}.

\begin{itemize}  
\item[(G5)] There exists $\ep_0 \in (0, 1)$ such that 
		the family $\{ g^\ep \}_{\ep \in (0, \ep_0)} \sb C(\pl \gO)$ is uniformly bounded on $\pl \gO$ and
		that, for any $\ep \in (0, \ep_0)$,
		\begin{equation*}
                g^\ep (x) \leq \int_0^\vth L(X^\ep (t, x, \ga), \ga (t)) e^{-\gl t} \, dt + g^\ep (y) e^{-\gl \vth} 
                \end{equation*}
                for all $x, y \in \pl \gO$, $\vth \in [0, \inft)$, and $\ga \in L^\inft (\R; \R^2)$,
                where the conditions
                \begin{equation*}
                X^\ep (\vth, x, \ga) = y \ \ \ \text{ and } \ \ \ X^\ep (t, x, \ga) \in \ol \gO \ \ \ \text{ for all } t \in [0, \vth]
                \end{equation*}
                are satisfied, that is, $\vth$ is a visiting time at $y$ of the trajectory $\{ X^\ep (t, x, \ga) \}_{t \geq 0}$ constrained in $\ol \gO$.
\end{itemize}

We state here a basic existence and uniqueness proposition for \eqref{epHJ}.

\begin{prop} \label{solution}
For $\ep \in (0, \ep_0)$, we define the function $u^\ep : \ol \gO \to \R$ by
\begin{align} \label{value}
\begin{aligned}
u^\ep (x) = \inf \Big\{ \int_0^{\gt^\ep} L(X^\ep (t, x, \ga), \ga &(t))e^{-\gl t} \, dt \\
            &+ g^\ep (X^\ep (\gt^\ep, x, \ga))e^{-\gl \gt^\ep} \mid \ga \in L^\inft (\R; \R^2) \Big\},
\end{aligned}
\end{align}
where $\gt^\ep$ is a visiting time in $\pl \gO$ of $\{ X^\ep (t, x, \ga) \}_{t \geq 0}$ constrained in $\ol \gO$,
that is, $\gt^\ep$ is a nonnegative number such that
\begin{equation*}
X^\ep (\gt^\ep, x, \ga) \in \pl \gO \ \ \ \text{ and } \ \ \ X^\ep (t, x, \ga) \in \ol \gO \ \ \ \text{ for all } t \in [0, \gt^\ep].
\end{equation*} 
Then $u^\ep$ is the unique viscosity solution of \textrm{\eqref{epHJ}} and
continuous on $\ol \gO$, and satisfies $u^\ep = g^\ep$ on $\pl \gO$.
Furthermore the family $\{ u^\ep \}_{\ep \in (0, \ep_0)}$ is uniformly bounded on $\ol \gO$.
\end{prop}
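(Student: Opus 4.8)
The plan is to follow the standard program that identifies a value function in optimal control with the unique viscosity solution of its dynamic programming equation. Concretely, I would (i) establish the uniform bounds, (ii) prove the dynamic programming principle for \eqref{value} and deduce that $u^\ep$ solves the first equation of \eqref{epHJ} in $\gO$ in the viscosity sense, (iii) use the compatibility condition (G5) to obtain $u^\ep = g^\ep$ on $\pl\gO$ together with continuity up to the boundary, and (iv) conclude uniqueness from a comparison principle. Since most of these steps are carried out exactly as in \cite{K}, the real content is to check that (G1)--(G5) and \eqref{coercivity} suffice; wherever the argument is literally the one in \cite{K} or a textbook treatment such as \cite{L}, I would cite it rather than reproduce it.

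For the uniform bounds, note first that $\ol\gO$ is compact by (H1), so $x \mapsto G(x, 0)$ is bounded; evaluating the definition of $L$ at $p = 0$ gives $L(x, \xi) \geq -G(x, 0)$, a bound independent of $\xi$, whence the running-cost integral is bounded below by $-\| G(\cd, 0) \|_{L^\inft(\ol\gO)}/\gl$ for every $\ep$ and $\ga$. Combined with the uniform bound on $\{ g^\ep \}$ furnished by (G5), this gives a uniform lower bound on $u^\ep$. For the upper bound I would, for each $x \in \ol\gO$, construct an admissible control $\ga$ of magnitude below the coercivity threshold $\nu$ whose trajectory $X^\ep(\cd, x, \ga)$ stays in $\ol\gO$ and reaches $\pl\gO$ in finite time: since $DH \cd b \equiv 0$, one has $\fr{d}{dt} H(X^\ep) = DH(X^\ep) \cd \ga$, so a control pointing along $\pm DH$ transports the trajectory transversally across the level sets of $H$ to $\pl \gO = \{ H = h_i \}$. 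Because $L$ is bounded on $\ol\gO \tim \ol B_R$ for $R < \nu$ and the discount $e^{-\gl t}$ is integrable, the resulting cost is at most $\| L \|_{L^\inft(\ol\gO \tim \ol B_R)}/\gl + \sup_\ep \| g^\ep \|_{L^\inft(\pl\gO)}$ regardless of the (finite) visiting time, giving the uniform upper bound.

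Step (ii) is classical once the Legendre duality between $G$ and $L$ is in place: by (G3) and \eqref{coercivity} we have $G(x, p) = \sup_{\xi} \{ -\xi \cd p - L(x, \xi) \}$, so the generator of \eqref{state} is $\sup_\xi \{ -(\ep^{-1} b(x) + \xi) \cd p - L(x, \xi) \} = -\ep^{-1} b(x) \cd p + G(x, p)$, which is exactly the Hamiltonian in \eqref{epHJ}; the dynamic programming principle then yields that $u^\ep$ is simultaneously a viscosity sub- and supersolution in $\gO$. For the boundary behaviour, taking $\gt^\ep = 0$ for $x \in \pl\gO$ is admissible and gives $u^\ep(x) \leq g^\ep(x)$ at once; the reverse inequality and the continuity of $u^\ep$ up to $\pl\gO$ are exactly what (G5) is designed to produce, in the same way the compatibility condition of \cite{L} guarantees continuity up to the boundary of constrained value functions, and I would carry this out following \cite{K}. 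Uniqueness in step (iv) then follows from a comparison principle for \eqref{epHJ}: for fixed $\ep$ the map $(x, r, p) \mapsto \gl r - \ep^{-1} b(x) \cd p + G(x, p)$ is proper, since $\gl > 0$, and satisfies the usual structure condition, since $b$ is Lipschitz on the compact set $\ol\gO$ and $G$ obeys (G2).

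I expect the main obstacle to be the boundary analysis in step (iii). Although $\pl\gO$ sits on the regular level $\{ H = h_i \}$, away from the degenerate critical point, so that the flow \eqref{HS} is well behaved there, upgrading the trivial inequality $u^\ep \leq g^\ep$ on $\pl\gO$ to an equality, and to continuity up to $\pl\gO$, requires selecting near-optimal trajectories that reach the boundary in a controlled time and estimating their cost against $g^\ep$; this is precisely the point at which (G5) must be used in full strength rather than merely as a bound. By contrast, the non-coercivity of the full Hamiltonian in the $b$-direction, which is the central difficulty elsewhere in the paper, plays no role here: both the representation \eqref{value} and the comparison principle rely only on the coercivity \eqref{coercivity} of $G$ and the regularity (G2).
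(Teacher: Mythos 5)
Your outline is correct and is essentially the route the paper takes: the paper offers no proof of its own, merely observing that \eqref{order-DH} gives $|b(x)|\le |x|$ near the origin and deferring to \cite[Proposition 2.3]{K} for exactly the value-function/dynamic-programming/comparison argument you describe, with (G5) doing the boundary work. The one point your sketch glosses over is the one the paper does flag: the feedback $\pm\nu DH/|DH|$ you use for the upper bound (and for reaching $\pl\gO$) is singular at the degenerate critical point $0\in\gO$, and the bound on $|b|$ near the origin coming from (H3) is precisely what lets one first steer away from $0$ with a bounded control, so that the construction, the finiteness of exit times, and the continuity of $u^\ep$ there survive the passage from non-degenerate to degenerate critical points.
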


Noting that \eqref{order-DH} implies, in particular, that $|b(x)| \leq |x|$ for all $x \in V$,
we can prove this proposition along the same lines as the proof of \cite[Proposition 2.3]{K}, so we skip it here.

Thanks to this proposition, we may define hereafter $u^\ep$ by \eqref{value}.
Since the family $\{ u^\ep \}_{\ep \in (0, \ep_0)}$ is uniformly bounded on $\ol \gO$,
the half relaxed-limits, as $\ep \to 0+$, of $u^\ep$
\begin{align*}
&v^+ (x) = \lim_{r \to 0+} \sup \{ u^\ep (y) \mid y \in B_r (x) \cap \ol \gO, \ \ep \in (0, r) \}, \\
&v^- (x) = \lim_{r \to 0+} \inf \{ u^\ep (y) \mid y \in B_r (x) \cap \ol \gO, \ \ep \in (0, r) \}
\end{align*}
are well-defined and bounded on $\ol \gO$.

If $v^+(x) \not= v^-(x)$ for some $x \in \pl \gO$,
a boundary layer happens in the limiting process of sending $\ep \to 0+$.
In order that any boundary layer does not occur, 
in addition to (G1)--(G5), we henceforth assume the following.

\begin{itemize}
\item[(G6)] There exist constants $d_i$ such that $v^\pm (x) = d_i$ for all $x \in \pl_i \gO$ and $i \in \cI_0$.
\end{itemize}
It is obvious that this leads to
\begin{equation*} \label{g-d_i}
\lim_{\gO \ni y \to x} v^\pm (y) = \lim_{\ep \to 0+} g^\ep (x) = d_i \ \ \ \text{ uniformly for } x \in \pl_i \gO \text{ for all } i \in \cI_0.
\end{equation*}

Our asymptotic analysis of \eqref{epHJ} is based on rather implicit (or ad hoc) assumptions (G5) and (G6),
which are indeed convenient for our arguments below.
However, it is not clear which $g^\varepsilon$ and $d_i$ satisfy (G5) and (G6). 
Thus, it is important to know when (G5) and (G6) hold.
In \cite{K}, for $N = 3$, the author gave a fairly general sufficient condition on the data $d_i$, for which (G5) and (G6) hold.
In Section 5, for more general $N$, we will present a similar condition to that in \cite{K}.

\subsection{Main result} \label{main result}

We introduce some notation which are needed to state our main result.

For $i \in \cI_0$ and $h \in \bar J_i$, let $L_i(h)$ denote the length of $c_i(h)$, that is,
\begin{equation} \label{L_i}
L_i (h) = \int_{c_i (h)} dl.
\end{equation} 
Here $dl$ denotes the line element.
Obviously, $L_i(h)$ are positive and bounded.

Recall that, if $h \in J_i \cup \{ h_i \}$ and $i \in \cI_1$,
then the map $t \mapsto X(t, \bar x_i)$ is periodic for any $\bar x_i \in c_i(h)$.
Note that the minimal periods are independent of choice of $\bar x_i \in c_i (h)$.
Hence, we can write $T_i (h)$ for the minimal period of the trajectory of the system \eqref{HS} on $c_i (h)$.
Noting that $|b(x)| = |DH(x)|$, the minimal period $T_i (h)$ has the form
\begin{equation} \label{T_i}
T_i (h) = \int_{c_i (h)} \frac{1}{|DH|} \, dl,
\end{equation}
which shows, in view of (H2), that
\begin{equation*}
0< T_i (h) < \inft \ \ \ \text{ and } \ \ \ \lim_{J_i \ni r \to 0} T_i (r) = \inft.
\end{equation*}

For $i \in \cI_0$,
define the function $\ol G_i : J_i \cup \{ h_i \} \tim \R \to \R$ by
\begin{equation*}
\ol G_i (h, q) = \cfrac{1}{T_i (h)} \int_{c_i (h)} \fr{G(x, qDH)}{|DH|} \, dl = \fr{1}{T_i(h)} \int_0^{T_i(h)} G(X(t, x), qDH(X(t, x))) \, dt,
\end{equation*}
where $x \in c_i(h)$ is fixed arbitrarily. 
We note here that the second formula above reveals that
$\ol G_i(h, q)$ is the mean value of the function $G(\cd, qDH(\cd))$
along the curve $X(t, x)$ on the loop $c_i(h)$.

We then state our main result.

\begin{thm} \label{main}
There exist viscosity solutions $u_i \in C(\bar J_i)$, with $i \in \cI_0$, of
\begin{equation} \label{limHJ}
\gl u + \ol G_i (h, u') = 0 \ \ \ \text{ in } J_i, \tag{$\mathrm{HJ}_i$} 
\end{equation} 
such that $u_i (h_i) = d_i$, $u_1 (0) = \lds = u_{N-1} (0)$, and, as $\ep \to 0+$,
\begin{equation*}
u^\ep \to u_i \circ H \ \ \ \text{ uniformly on } \ol \gO_i.
\end{equation*}
That is, if we define $\bar u \in C(\ol \gO)$ by
\begin{align*}
\bar u (x) = u_i \circ H (x) \ \ \ \text{ if } x \in \ol \gO_i, 
\end{align*}
then, as $\ep \to 0+$,
\begin{equation*}
u^\ep \to \bar u \ \ \ \text{ uniformly on } \ol \gO.
\end{equation*}
\end{thm}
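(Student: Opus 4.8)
The plan is to identify the limit through the half-relaxed limits $v^\pm$ already introduced, to reduce the two-dimensional problem to a one-dimensional one on the graph $\gG$, and to close the argument by a comparison principle on $\gG$. Concretely, I would first show that $v^+$ and $v^-$ are constant along each loop $c_i(h)$, so that they descend to functions $v_i^\pm$ on $\ol J_i$ via $v_i^\pm(h) = v^\pm(x)$ for $x \in c_i(h)$; then prove that $v_i^+$ is a viscosity subsolution and $v_i^-$ a viscosity supersolution of \eqref{limHJ} on $J_i$, matching the data $d_i$ at $h_i$ and satisfying the correct junction condition at the node $h = 0$; and finally invoke a comparison principle for \eqref{limHJ} on $\gG$ to get $v^+ \le v^-$ on $\ol\gO$. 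Since $v^- \le v^+$ holds by definition, this forces $v^+ = v^-$, which by the standard half-relaxed-limit criterion yields the locally uniform convergence of $u^\ep$ to the common value; setting $u_i := v_i^+ = v_i^-$ produces the asserted solutions $u_i$ and the limit $\ol u = u_i \circ H$.

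The constancy of $v^\pm$ on each loop rests on the first integral identity $b \cd DH = 0$: along the flow $X(t, x)$ of \eqref{HS} one has, at least formally, $\fr{d}{dt} u^\ep(X(t, x)) = \ep(\gl u^\ep + G(\cd, Du^\ep))$, so over bounded time intervals $u^\ep$ varies by $O(\ep)$ along trajectories, and the relaxed limits are therefore invariant under the flow, hence constant on $c_i(h)$. For the edge equations I would use the perturbed test function method. Given $h_0 \in J_i$ and a smooth $\varphi$ touching $v_i^+$ from above at $h_0$, the candidate is $\varphi_\ep(x) = \varphi(H(x)) + \ep w(x)$, where the corrector $w$ is chosen loop-by-loop so that $-b \cd Dw + G(x, \varphi'(H)DH)$ equals its average $\ol G_i(H, \varphi'(H))$ along $c_i(h_0)$. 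Because $b \cd DH = 0$, the singular term drops out, $-b \cd D\varphi_\ep / \ep = -b \cd Dw$, and the corrector equation is exactly solvable since, along a period, the mean of $G(\cd, \varphi'DH) - \ol G_i$ vanishes by the very definition of $\ol G_i$ and the period formula \eqref{T_i}. Inserting $\varphi_\ep$ into the subsolution inequality for $u^\ep$ and sending $\ep \to 0+$ at a maximum point of $u^\ep - \varphi_\ep$ then yields $\gl \varphi(h_0) + \ol G_i(h_0, \varphi'(h_0)) \le 0$, i.e.\ the subsolution property; the supersolution property of $v_i^-$ is symmetric.

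The main obstacle is the node $h = 0$, which corresponds to the (possibly degenerate) critical point at the origin and where the period blows up, $\lim_{J_i \ni r \to 0} T_i(r) = \inft$. There the corrector construction degenerates, since the flow slows down and $|DH| = |b|$ vanishes at the origin, so the argument above must be localized strictly inside each $J_i$ and supplemented by a separate analysis establishing continuity $v_1^+(0) = \cds = v_{N-1}^+(0) = v_0^+(0)$ together with the Kirchhoff-type junction condition needed for comparison. Since the normal form \eqref{saddle} is unavailable in the degenerate case, I would replace explicit solutions of \eqref{HS} near the origin by the geometric integral formulas \eqref{L_i} and \eqref{T_i}, controlling the time spent and the transversal displacement near the node through (H3), (H4) and the relation \eqref{H-DH}; in particular the lower bound $c|x|^n \le |DH(x)|$ quantifies the rate at which trajectories approach the separatrix and shows that, in the limit, the oscillation of admissible trajectories across the node is negligible. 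This is precisely the step that extends the result of \cite{K} from at most four segments at a node to an arbitrary number $N$ of segments.

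With $v_i^+$ a subsolution and $v_i^-$ a supersolution of \eqref{limHJ} on each edge, meeting the data $d_i$ at $h_i$ (immediate from (G6)) and the junction condition at $h = 0$, I would conclude by a comparison principle for \eqref{limHJ} on $\gG$, using the properties of $\ol G_i$ and of subsolutions developed in Section 3. The convexity and coercivity of $q \mapsto \ol G_i(h, q)$ inherited from (G3), (G4) and \eqref{coercivity}, together with $\gl > 0$, give comparison on each edge, and the node condition glues these into a global inequality $v^+ \le v^-$ on $\ol\gO$. Hence $v^+ = v^-$, the convergence $u^\ep \to \ol u$ is locally uniform and in fact uniform on $\ol\gO$ by the boundary compatibility in (G6), and $u_i := v_i^\pm$ are the desired solutions.
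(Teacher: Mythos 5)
Your overall architecture (half-relaxed limits $v^\pm$, constancy on the loops $c_i(h)$, reduction to the edge equations \eqref{limHJ} by the perturbed test function method with loop-wise correctors, then comparison) coincides with the paper's: the constancy on loops is obtained there from the stability inequalities $-b\cd Dv^+\le 0$ and $-b\cd Dv^-\geq 0$, and your corrector construction is exactly Theorem \ref{convergence}. The genuine gap is at the node. The paper neither proves nor needs a comparison principle on the graph $\gG$ with a Kirchhoff-type junction condition; instead it reduces everything to the two pointwise inequalities \eqref{enough}, namely $v^+\leq d\leq v^-$ on the separatrix $c_0(0)$ with $d=\min_{i\in\cI_0}u_i^+(0)$, after which ordinary comparison on each edge $J_i$ separately suffices. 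Your proposal leaves precisely these two inequalities as ``a separate analysis,'' and that is where all of the new work in the degenerate case lives. Note also that a graph comparison principle is not an off-the-shelf tool here: the effective Hamiltonians $\ol G_i$ lose coercivity as $h\to 0$ because $T_i(h)\to\infty$, and the correct node behaviour is the one-sided pair of inequalities above (a minimum condition) rather than a flux balance, so ``Kirchhoff-type'' points in the wrong direction.

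Concretely, the inequality $v^-\geq d$ is obtained by contradiction from Lemma \ref{nu_i^d} together with Lemma \ref{key}: one must construct a $C^1$ function $\psi$ on a tubular neighbourhood $\gO(\gd)$ of $c_0(0)$ satisfying $-b\cd D\psi+G(x,0)<G(0,0)+\ge$ there, which the paper does by straightening the flow via the coordinates $Z_i(t,s)=X(t,Y_i(s))$ on each branch of the separatrix and integrating a mollified, mean-corrected copy of $G(0,0)-G(\cdot,0)$ along trajectories; this step also requires the subsolution bound $\gl u(0)+G(0,0)\le 0$ of Lemma \ref{u-G0}, which in turn rests on Lemma \ref{limmin-G_i} and hence on the length estimate of Lemma \ref{length}. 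Nothing in your sketch produces such a $\psi$. The complementary inequality $v^+\leq d$ is proved from the control representation \eqref{value} by steering trajectories across the node with the modified flows $X_\pm^\ep$ and estimating the crossing time through \eqref{H-DH} (Lemma \ref{tau_2-tau_1}); your appeal to (H3), (H4) and ``negligible oscillation across the node'' is in this spirit but is not developed into an actual estimate. Until these two node inequalities are established, the argument does not close.
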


We will give the proof of this theorem in Section 4.

\section{The limiting problem}

\subsection{The minimal periods and the length}

In this subsection, we show an integrability of $T_i (h)$ and behavior of $c_i(h)$ near the origin
without any explicit formula of $X(t, x)$, which is a crucial difference from \cite{K}.

For this, we need the following lemma, 
which we refer to \cite[Lemma 1.1, Section 8]{FW}.

\begin{lem} \label{diff-H}
Let $i \in \cI_0$ and $r_1, r_2 \in J_i \cup \{ h_i \}$ be such that $r_1 < r_2$.
Set $D (r_1, r_2) = \{ x \in \gO_i \mid r_1 < H(x) < r_2 \}$.
If $f \in C^1(\ol{D(r_1, r_2)})$, then
\begin{equation*}
\int_{c_i (r_2)} f|DH| \, dl - \int_{c_i (r_1)} f|DH| \, dl  = \int_{D (r_1, r_2)} (Df \cd DH + f \gD H) \, dx,
\end{equation*}
and, moreover, for any $h \in J_i \cup \{ h_i \}$,
\begin{equation*}
\fr{d}{dh} \int_{c_i(h)} f|DH| \, dl = \int_{c_i (h)} \left( \fr{Df \cd DH}{|DH|} + f \fr{\gD H}{|DH|} \right) \, dl.
\end{equation*}
\end{lem}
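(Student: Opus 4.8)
The plan is to read both identities as incarnations of the divergence theorem for the vector field $V = f\,DH$ on the annular region $D(r_1, r_2)$, and then to pass to the infinitesimal (derivative) version via the coarea formula. First I would record the elementary identity $\diver(f\,DH) = Df \cd DH + f\,\gD H$, so that the volume integrand on the right-hand side of the first formula is exactly $\diver V$. The decisive geometric input is that $\ol{D(r_1, r_2)}$ contains no critical point of $H$: for $i \in \cI_1$ one has $H \geq r_1 \geq h_i > H(z_i)$ throughout, which excludes the local minimum, while the saddle (at level $0$) is excluded because $r_2 < 0$; for $i = 0$ one has $H \geq r_1 > 0$, again excluding every critical point. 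Consequently $DH \neq 0$ on $\ol{D(r_1, r_2)}$, each of $c_i(r_1)$ and $c_i(r_2)$ is a regular embedded closed curve, $\pl D(r_1, r_2) = c_i(r_1) \cup c_i(r_2)$ is $C^1$, and $V$ is $C^1$ up to the boundary, so Gauss's theorem applies.

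Next I would evaluate $V \cd n$ on the two boundary components. Since $DH$ is normal to the level sets of $H$, the outward unit normal of $D(r_1, r_2)$ equals $+DH/|DH|$ along $c_i(r_2)$, where $H$ increases out of the region, and $-DH/|DH|$ along $c_i(r_1)$, where $H$ decreases out of it. Using $DH \cd DH = |DH|^2$ gives $V \cd n = f|DH|$ on $c_i(r_2)$ and $V \cd n = -f|DH|$ on $c_i(r_1)$, so summing the two boundary contributions produces $\int_{c_i(r_2)} f|DH|\,dl - \int_{c_i(r_1)} f|DH|\,dl$, which is the first identity.

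For the derivative formula I would feed this into the coarea formula. Writing $g = Df \cd DH + f\,\gD H$ and using that $|DH| \neq 0$, coarea gives $\int_{D(r_1,r_2)} g\,dx = \int_{r_1}^{r_2} \big( \int_{c_i(s)} g/|DH|\,dl \big)\,ds$. Hence, setting $F(h) = \int_{c_i(h)} f|DH|\,dl$ and $\phi(s) = \int_{c_i(s)} \big( Df \cd DH/|DH| + f\,\gD H/|DH| \big)\,dl$, the first identity reads $F(r_2) - F(r_1) = \int_{r_1}^{r_2} \phi(s)\,ds$. Differentiating by the fundamental theorem of calculus then yields the claimed expression for $dF/dh$ at any $h \in J_i \cup \{h_i\}$ (a one-sided derivative at the endpoint $h_i$).

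The main obstacle I expect is not a single computation but the regularity bookkeeping that legitimizes these global tools: confirming that $\ol{D(r_1,r_2)}$ stays uniformly away from all critical points, so that the level curves foliate it smoothly and $1/|DH|$ stays bounded, and checking that $\phi$ is continuous in $s$ up to the endpoint $h_i$, so that the fundamental theorem of calculus upgrades the integrated identity to the pointwise derivative. Both facts follow once $DH \neq 0$ on the closed region together with $H \in C^2$ and $f \in C^1$, but they are precisely where the choice of $h_i$ strictly between $H(z_i)$ and $0$ is used.
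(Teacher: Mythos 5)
Your proof is correct, and it is essentially the standard argument behind this lemma: the paper itself gives no proof but cites \cite[Lemma 1.1, Section 8]{FW}, whose derivation is exactly your route --- the divergence theorem applied to $f\,DH$ on the annulus $D(r_1,r_2)$ (using that $DH\neq 0$ there because $r_1,r_2$ stay strictly between the critical values), followed by the coarea formula and the fundamental theorem of calculus. The points you flag as bookkeeping (regularity of the level curves, continuity of $h\mapsto\int_{c_i(h)}(\cdot)/|DH|\,dl$, one-sidedness at $h_i$) are indeed the only things left to check and all follow from $DH\neq 0$ on the closed annulus.
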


\begin{lem} \label{C^1-L_iT_i}
For all $i \in \cI_0$, $L_i, T_i \in C^1 (J_i \cup \{ h_i \})$.
\end{lem}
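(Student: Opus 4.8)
The plan is to express both $L_i$ and $T_i$ as integrals of the form already treated in Lemma~\ref{diff-H}, and then read off differentiability together with an explicit derivative formula from that lemma. Since a factor $dl$ is present in \eqref{L_i} and \eqref{T_i}, I would simply insert compensating powers of $|DH|$ and write
\[
L_i(h) = \int_{c_i(h)} \fr{1}{|DH|}\,|DH|\,dl, \qquad T_i(h) = \int_{c_i(h)} \fr{1}{|DH|^2}\,|DH|\,dl,
\]
so that $L_i$ and $T_i$ are of the shape $\int_{c_i(h)} f|DH|\,dl$ with $f = |DH|^{-1}$ and $f = |DH|^{-2}$, respectively.

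The decisive point is that these weights are admissible in Lemma~\ref{diff-H}. For every $h \in J_i \cup \{ h_i \}$ the loop $c_i(h)$ avoids all critical points of $H$: the saddle at the origin lies on the level $\{ H = 0 \}$, which is excluded because $J_i \cup \{ h_i \} \subset [h_i, 0)$ for $i \in \cI_1$ and $J_0 \cup \{ h_0 \} \subset (0, h_0]$, while each local minimum $z_j$ sits at a level $H(z_j) < h_j$. Hence $|DH| = |b|$ is bounded away from zero on every closed annular region $\ol{D(r_1, r_2)}$ with $r_1 < r_2$ in $J_i \cup \{ h_i \}$, and since $H \in C^2$ the functions $|DH|^{-1}$ and $|DH|^{-2}$ belong to $C^1(\ol{D(r_1, r_2)})$. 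Lemma~\ref{diff-H} then applies and shows that, for each $h \in J_i \cup \{ h_i \}$, the functions $L_i$ and $T_i$ are differentiable, with
\[
L_i'(h) = \int_{c_i(h)}\left(\fr{D(|DH|^{-1})\cd DH}{|DH|} + \fr{1}{|DH|}\fr{\gD H}{|DH|}\right)dl,
\]
and the analogous formula for $T_i'$ obtained by replacing $|DH|^{-1}$ by $|DH|^{-2}$.

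It remains to upgrade differentiability to $C^1$, i.e.\ to verify that these derivatives depend continuously on $h$, and this is the only genuinely delicate step. Writing $L_i'(h) = \int_{c_i(h)} \Phi\,dl$, the integrand $\Phi$ is continuous on the annular region, but since $H$ is merely $C^2$ it need not be $C^1$; thus I cannot differentiate once more nor apply Lemma~\ref{diff-H} directly to $\Phi$. Instead I would establish the general fact that $h \mapsto \int_{c_i(h)} g\,dl$ is continuous for every $g \in C(\ol{D(r_1, r_2)})$. For $g \in C^1$ this is immediate, because $g|DH|^{-1} \in C^1$ and Lemma~\ref{diff-H} makes $h \mapsto \int_{c_i(h)}(g|DH|^{-1})|DH|\,dl = \int_{c_i(h)} g\,dl$ differentiable, hence continuous. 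For a general continuous $g$ I would approximate it uniformly by $C^1$ functions $g_k$; since $|\int_{c_i(h)}(g - g_k)\,dl| \le \|g - g_k\|_\inft L_i(h)$ and $L_i$ is bounded, the continuous maps $h \mapsto \int_{c_i(h)} g_k\,dl$ converge uniformly in $h$, so their limit $h \mapsto \int_{c_i(h)} g\,dl$ is continuous. Applying this with $g = \Phi$ (and its counterpart for $T_i$) yields $L_i', T_i' \in C(J_i \cup \{ h_i \})$, which is exactly $L_i, T_i \in C^1(J_i \cup \{ h_i \})$.
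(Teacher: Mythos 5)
Your proof takes essentially the same route as the paper: it applies Lemma~\ref{diff-H} with $f = 1/|DH|$ and $f = 1/|DH|^2$ (admissible because $|DH|$ is bounded away from zero on each closed annulus $\ol{D(r_1,r_2)}$, the loops $c_i(h)$ avoiding all critical points for $h \in J_i \cup \{h_i\}$) to obtain the derivative formulas for $L_i$ and $T_i$. The paper stops there and treats the continuity of these derivatives as immediate; your additional uniform-approximation argument showing that $h \mapsto \int_{c_i(h)} g\,dl$ is continuous for every continuous $g$ is a correct and welcome filling-in of that implicit step.
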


\begin{proof}
Together with \eqref{L_i} and \eqref{T_i}, Lemma \ref{diff-H},
with $f = 1/|DH|$ and with $f = 1/|DH|^2$, yields respectively
\begin{equation*}
\fr{d}{dh} L_i (h) = \int_{c_i (h)} \left( -\fr{\tr DH \otim DH D^2H}{|DH|^4} + \fr{\gD H}{|DH|^2} \right) \, dl,
\end{equation*}
and
\begin{equation*}
\fr{d}{dh} T_i (h) = \int_{c_i (h)} \left( -\fr{2 \tr DH \otim DH D^2H}{|DH|^5} + \fr{\gD H}{|DH|^3} \right) \, dl,
\end{equation*}
where $D^2 H$ and $p \otim q$ denote the Hessian matrix of $H$ and 
the matrix $(p_iq_j)_{1 \leq i, j \leq 2}$ for $p, q \in \R^2$, respectively,
which imply that $L_i, T_i \in C^1(J_i \cup \{ h_i \})$.
\end{proof}

The behavior of $T_i (h)$ near $h = 0$ is the subject of

\begin{lem} \label{order-T_i}
For all $i \in \cI_0$, $T_i (h) = O(|h|^{-\fr{n}{m+2}})$ as $J_i \ni h \to 0$.
\end{lem}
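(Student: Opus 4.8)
The plan is to estimate the integral $T_i(h) = \int_{c_i(h)} \fr{1}{|DH|}\, dl$ from \eqref{T_i} directly, by controlling the integrand through the lower bound \eqref{H-DH} on $|DH|$ near the origin and handling the part of the loop away from the origin by a compactness argument. The key observation is that $H \equiv h$ on the loop $c_i(h)$, so \eqref{H-DH} evaluated along $c_i(h)$ turns a spatial lower bound on $|DH|$ into the explicit $h$-dependent bound $|DH(x)| \geq c_0 |h|^{\fr{n}{m+2}}$ for every $x \in c_i(h) \cap V$.

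First I would fix the neighborhood $V$ of the origin from (H3) and (H4) and split the loop as $c_i(h) = (c_i(h) \cap V) \cup (c_i(h) \sm V)$, estimating each part separately. On the inner part $c_i(h) \cap V$, the bound $|DH(x)| \geq c_0 |h|^{\fr{n}{m+2}}$ gives $1/|DH| \leq c_0^{-1} |h|^{-\fr{n}{m+2}}$, so that
\begin{equation*}
\int_{c_i(h) \cap V} \fr{1}{|DH|}\, dl \leq \fr{1}{c_0} |h|^{-\fr{n}{m+2}} L_i(h),
\end{equation*}
which, since $L_i$ is bounded, is of order $|h|^{-\fr{n}{m+2}}$.

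For the outer part I would argue that $\ol \gO_i \sm V$ is compact and contains no critical point of $H$: the only critical point of $H$ lying on the limiting level set $c_i(0)$ is the origin, and the local minima $z_j$ satisfy $H(z_j) < h_j$, hence lie outside every $\ol \gO_i$. Therefore $|DH| \geq c_1$ on $\ol \gO_i \sm V$ for some $c_1 > 0$ independent of $h$, and since $c_i(h) \sm V \sb \ol \gO_i \sm V$,
\begin{equation*}
\int_{c_i(h) \sm V} \fr{1}{|DH|}\, dl \leq \fr{1}{c_1} L_i(h) = O(1).
\end{equation*}
Adding the two contributions and using $0 < \fr{n}{m+2} < 1$ from (H4), so that $|h|^{-n/(m+2)} \to \inft$ dominates the $O(1)$ term as $h \to 0$, yields $T_i(h) = O(|h|^{-\fr{n}{m+2}})$.

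The main obstacle is not the analytic estimate itself, which reduces to the already established inequality \eqref{H-DH}, but rather verifying the geometric claim used in the outer estimate: that $|DH|$ is bounded below on $\ol \gO_i \sm V$ uniformly in $h$, equivalently that no critical point of $H$ other than the origin belongs to $\ol \gO_i$. This requires a careful check of the positions of the critical points $z_j$ relative to the sets $\gO_i$, using (H2) together with the choice of the levels $h_i$ and the boundedness of $\ol \gO_i$ guaranteed by (H1).
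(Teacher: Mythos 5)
Your proof is correct and follows essentially the same route as the paper's: split the loop into the part near the origin, where \eqref{H-DH} together with $H\equiv h$ on $c_i(h)$ gives $|DH|\geq c_0|h|^{\fr{n}{m+2}}$, and the part away from the origin, where $|DH|$ is bounded below, then invoke the boundedness of $L_i(h)$. If anything you are slightly more careful than the paper, which defines its lower bound $\mu$ as a minimum over $c_i(h)\sm\ol B_\gk$ (a priori $h$-dependent), whereas you justify a bound uniform in $h$ by checking that the compact set $\ol\gO_i\sm V$ contains no critical points of $H$.
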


Here, $m$ and $n$ are the constants from, respectively, (H3) and (H4).
We henceforth write $L(c)$ for the length of a given curve $c$.

\begin{proof}
Fix any $i \in \cI_0$ and $h \in J_i$.
Choose $\gk > 0$ so that $\ol B_\gk \sb \gO \cap V$.
By replacing $h$ if necessary so that $|h|$ is small enough,
we may assume that $c_i (h) \cap B_\gk \not= \emp$.

Set $\mu = \min_{c_i (h) \sm \ol B_\gk} |DH| > 0$.
By \eqref{H-DH}, we compute that
\begin{align*}
T_i (h) &= \int_{c_i (h) \cap B_\gk} \fr{1}{|DH|} \, dl + \int_{c_i (h) \sm \ol B_\gk} \fr{1}{|DH|} \, dl \\
         &\leq \int_{c_i (h) \cap B_\gk} \fr{1}{c_0 |H|^{\fr{n}{m+2}}} \, dl + \mu^{-1} L(c_i (h) \sm \ol B_\gk)
         \leq c_0^{-1} L_i (h)|h|^{-\fr{n}{m+2}} + \mu^{-1} L_i (h),
\end{align*}
from which we conclude that $T_i (h) = O(|h|^{-\fr{n}{m+2}})$ as $J_i \ni h \to 0$.
\end{proof}

We remark that since $n < m+2$, this lemma assures that
\begin{equation} \label{T_i-L^1}
T_i \in L^1(J_i) \ \ \ \text{ for all } i \in \cI_0.
\end{equation}
This integrability ensures the continuity of solutions of \eqref{limHJ} up to $h = 0$ (See Lemma \ref{extend}).

\begin{lem} \label{length}
Let $\gk \in (0, 1)$ be a constant such that $\ol B_\gk \sb \gO \cap V$.
Then, there exists a constant $K > 0$, independent of $h$, such that, for any $r \in (0, \gk)$,
\begin{equation*}
L(c_i (h) \cap B_r) \leq K r^{m-n+2} \ \ \ \text{ for all } h \in J_i \text{ and } i \in \cI_0.
\end{equation*}
\end{lem}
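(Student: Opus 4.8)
The plan is to avoid any explicit description of a single level curve and instead control its in-ball length through the coarea formula, which trades the (hard) pointwise length for the (easy) area of a thin sublevel strip. Fix $i \in \cI_0$ and $h \in J_i$ with $c_i(h) \cap B_r \neq \emptyset$; on such a curve $|H| = |h| \le C|x|^{m+2} \le Cr^{m+2}$ by \eqref{order-H}, so only levels with $|h| \lesssim r^{m+2}$ are relevant. For a small increment $\delta > 0$ to be chosen, the coarea formula applied to $H$ on $B_r \cap \gO_i \cap \{h \le H \le h+\delta\}$ gives
\[
\int_h^{h+\delta} L(c_i(s) \cap B_r)\, ds = \int_{B_r \cap \gO_i \cap \{h \le H \le h+\delta\}} |DH|\, dx .
\]
Using the gradient bound $|DH(x)| \le C|x|^{m+1} \le Cr^{m+1}$ on $B_r$ (a consequence of (H3), cf. \eqref{order-DH}, valid since $\ol B_\gk \sb \gO \cap V$ and $r<\gk$) together with the trivial area bound $|B_r| = \pi r^2$, the right-hand side is at most $C\pi r^{m+3}$.

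Next I would convert this averaged estimate into a bound at the single level $h$. The plan is to establish a uniform comparison: there is $c_* > 0$, independent of $h$ and $r$, such that $L(c_i(s) \cap B_r) \ge c_* L(c_i(h) \cap B_r)$ for all $s$ in a level window of width $\delta$ taken on the side of $h$ facing the critical value $0$ (namely $[h, h+\delta]$ when $i \in \cI_1$, and $[h-\delta, h]$ when $i = 0$). Granting this, the displayed integral is at least $c_* \delta\, L(c_i(h) \cap B_r)$, so that $L(c_i(h) \cap B_r) \le (C\pi/c_*)\, r^{m+3}/\delta$. I would then take $\delta = r^{n+1}$, which is admissible because $n \ge m+1$ (from (H4) and \eqref{order-DH}) gives $r^{n+1} \le r^{m+2}$, so the window stays narrower than the relevant level scale $r^{m+2}$; this yields exactly $L(c_i(h) \cap B_r) \le (C\pi/c_*)\, r^{m-n+2}$, i.e. the asserted bound with $K = C\pi/c_*$.

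The crux, and the step I expect to be the main obstacle, is the comparison $L(c_i(s)\cap B_r) \ge c_* L(c_i(h)\cap B_r)$. Geometrically it says that, as the level tends to the critical value $0$, the curve slides toward the origin along the nested sublevel sets $\{H < s\} \cap \gO_i$, so its portion inside $B_r$ cannot shorten abruptly across a level window as thin as $r^{n+1}$. To make this precise I would differentiate $s \mapsto L(c_i(s) \cap B_r)$ by Lemma \ref{diff-H} (with a cutoff adapted to $B_r$) and bound the integrand, which involves $D^2H/|DH|$, via the two-sided control $c|x|^n \le |DH(x)| \le C|x|^{m+1}$ and $|D^2H(x)| \le C|x|^m$ from (H3)--(H4); the delicate point is the part of the curve nearest the origin, where $|DH|$ degenerates and these estimates are weakest, and controlling that is the heart of the matter. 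An alternative route, which I would pursue if the comparison proves recalcitrant, is a dyadic decomposition $B_r = \bigcup_k (B_{2^{-k}r} \setminus B_{2^{-k-1}r})$: on each annulus $|x|$ is comparable to the scale $\rho = 2^{-k}r$, $H$ has no critical point by (H4), and the single-level length is estimated there directly --- either through the curvature bound $|\kappa| \lesssim |x|^m/|DH| \lesssim \rho^{m-n}$, which limits the radial oscillation of the curve, or by the coarea-plus-comparison argument localised to the annulus, where $|x| \asymp \rho$ removes the degeneration --- after which one sums the resulting geometric series in $\rho$, convergent precisely because $m - n + 2 > 0$.
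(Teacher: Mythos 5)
Your argument has a genuine gap at precisely the step you flag as the crux: the uniform comparison $L(c_i(s)\cap B_r) \ge c_*\,L(c_i(h)\cap B_r)$ over a level window of width $\delta = r^{n+1}$ is never established, and without it the coarea identity only controls the \emph{average} of $s\mapsto L(c_i(s)\cap B_r)$ over the window, which says nothing about the single level $h$ (that level could a priori carry an anomalously long curve compensated by shorter ones nearby). Neither fallback you sketch closes this. Differentiating $s\mapsto L(c_i(s)\cap B_r)$ via Lemma~\ref{diff-H} yields an integrand with $|DH|$ in the denominator that degenerates exactly on the part of the curve nearest the origin, which is where the estimate matters, and the cutoff adapted to $B_r$ creates boundary contributions on $\pl B_r$ that you do not control. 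The dyadic route is likewise only a sketch: a curvature bound $|\gk|\lesssim \rho^{m-n}$ on an annulus does not by itself bound length, since a curve of small curvature can still be long there unless one also counts its connected components in the annulus. There is a secondary flaw as well: for $i\in\cI_1$ the window $[h,h+\delta]$ can exit $J_i$ when $|h|<\delta=r^{n+1}$, because you only verify $\delta\le r^{m+2}$, not $\delta\le |h|$.

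The paper sidesteps the level-comparison problem entirely by working at the single level $h$ with the divergence theorem: applying Green's theorem to $g = DH/|DH|$ on $U=\{H<h\}\cap\gO_i\cap B_r$, whose boundary contains $c_i(h)\cap B_r$ with outer normal exactly $DH/|DH|$, gives
\begin{equation*}
L(c_i(h)\cap B_r) = \int_U \diver\!\left(\fr{DH}{|DH|}\right)dx \;-\; \int_{\{H<h\}\cap\, \pl B_r}\fr{DH\cd x}{r|DH|}\,dl ,
\end{equation*}
where the second term is at most $2\pi r\le 2\pi r^{m-n+2}$ and the divergence is bounded pointwise by $(8C/c)|x|^{m-n}$ via (H3)--(H4), which integrates over $B_r$ to $O(r^{m-n+2})$. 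In short, the length you want appears directly as a flux and is estimated by a bulk integral at the fixed level; to salvage your coarea route you would have to prove the uniform comparison across levels, which is essentially as hard as the lemma itself.
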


\begin{proof}
Fix any $r \in (0, \gk)$, $i \in \cI_0$, and $h \in J_i$.
If $c_i (h) \cap B_r = \emp$, then $L(c_i (h) \cap B_r) = 0$.
So, we assume henceforth that $c_i (h) \cap B_r \not= \emp$ and, 
for the time being, that $i = 1$.

Set $W = \{ x \in \gO_1 \mid H(x) < h \}$ and $U = W \cap B_r$.
Let $g = (g_1, g_2) \in C^1(\ol U)$.
Green's theorem yields
\begin{equation*}
\int_{\pl U} g \cd \nu \, dl = \int_U \diver g \, dx,
\end{equation*}
where $\nu$ is the outer normal vector on $\pl U$.
Note that $\pl U = (c_1 (h) \cap B_r) \cup (W \cap \pl B_r)$ and that 
$\nu = DH/|DH|$ on $c_1 (h) \cap B_r$ and $\nu = x/r$ on $W \cap \pl B_r$.

Now, setting $g = DH/|DH|$, we have
\begin{equation} \label{Green}
\int_{c_1 (h) \cap B_r} \, dl + \int_{W \cap \pl B_r} \fr{DH \cd x}{r|DH|} \, dl = \int_U \diver \left( \fr{DH}{|DH|} \right) \, dx.
\end{equation}

Here, we have
\begin{equation} \label{Int1}
\left| \int_{W \cap \pl B_r} \fr{DH \cd x}{r|DH|} \, dl \right| \leq \int_{W \cap \pl B_r} \fr{|DH||x|}{r|DH|} \, dl = \int_{W \cap \pl B_r} \, dl \leq 2\pi r.
\end{equation}

On the other hand, we compute that
\begin{equation*}
\diver \left( \fr{DH}{|DH|} \right) = \fr{1}{|DH|} \tr[(I - \ol{DH} \otim \ol{DH})D^2H],
\end{equation*}
where $I$ denotes the identity matrix of size two
and $\bar p$ is defined by
\begin{equation*}
\bar p = \fr{p}{|p|} \ \ \ \text{ for } p \in \R^2 \sm \{ 0 \}.
\end{equation*}
Since $|\ol{DH}|=1$, by (H3), we have 
\begin{equation*}
|\tr[(I - \ol{DH} \otim \ol{DH})D^2H]| \leq 2 \sum_{i, j=1}^2 |H_{x_ix_j}| \leq 8C|x|^m,
\end{equation*}
and, hence, by (H4),
\begin{equation*}
\left| \diver \left( \fr{DH}{|DH|} \right) \right| \leq \fr{8C|x|^m}{|DH|} \leq \fr{8C}{c} \, |x|^{m-n}.
\end{equation*}
Since $U \sb B_r$, from the inequality above, we get
\begin{equation} \label{Int2}
\left| \int_U \diver \left( \fr{DH}{|DH|} \right) \, dx \right| 
\leq \int_{B_r} \left| \diver \left( \fr{DH}{|DH|} \right) \right| \, dx = \fr{16C\pi}{c(m-n+2)} \, r^{m-n+2}.
\end{equation}

Now, \eqref{Green}--\eqref{Int2} together yield
\begin{equation*}
L(c_1 (h) \cap B_r) = \int_{c_1 (h) \cap B_r} \, dl \leq 2\pi \left( 1 + \frac{8C}{c(m-n+2)} \right) r^{m-n+2}.
\end{equation*}
Here, we have used the fact that
$0 < m-n+2 \leq 1$.
Arguments for the other $i$ parallel the above.
The proof is complete.
\end{proof}

\subsection{The Hamiltonians $\ol G_i$ in \eqref{limHJ}}

We state here some properties of the functions $\ol G_i$.

\begin{lem} 
For any $i \in \cI_0$, $\ol G_i \in C(J_i \cup \{ h_i \} \tim \R)$, and
\begin{equation} \label{loc-coercive}
\ol G_i (h, q) \geq \fr{\nu L_i (h)}{T_i (h)} |q| - M \ \ \ \text{ for all } (h, q) \in J_i \cup \{ h_i \} \tim \R, 
\end{equation} 
that is, $\ol G_i$ is locally coercive in the sense that,
for any compact interval $I$ of $J_i \cup \{ h_i \}$,
\begin{equation*}
\lim_{r \to \inft} \inf \{ \ol G_i (h, q) \mid h \in I, \ |q| \geq r \} = \inft.
\end{equation*}
\end{lem}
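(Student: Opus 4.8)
The plan is to handle the coercivity bound and the continuity by entirely different means: the bound \eqref{loc-coercive} is an immediate pointwise estimate, whereas continuity requires care with the loops $c_i(h)$ as $h$ varies. First I would prove \eqref{loc-coercive} directly. Applying the coercivity \eqref{coercivity} of $G$ with $p = qDH(x)$ gives, pointwise on $c_i(h)$, the bound $G(x, qDH) \geq \nu|q||DH| - M$, where I use $|qDH| = |q||DH|$. Since every point of $c_i(h)$ is a regular point of $H$ for $h \in J_i \cup \{h_i\}$ (the only critical values are $0$ and the $H(z_i)$, and $h \neq 0, H(z_i)$), I may divide by $|DH| > 0$, integrate along $c_i(h)$ against $dl$, and invoke the definitions \eqref{L_i} and \eqref{T_i} to obtain $\int_{c_i(h)} G(x,qDH)/|DH|\,dl \geq \nu|q|L_i(h) - MT_i(h)$; dividing by $T_i(h)$ yields \eqref{loc-coercive}. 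The local coercivity is then immediate: on a compact subinterval $I \subset J_i \cup \{h_i\}$, which is bounded away from $0$, both $L_i$ and $T_i$ are continuous and strictly positive (by Lemma \ref{C^1-L_iT_i}, together with $L_i$ bounded positive and $T_i$ finite positive), so $L_i/T_i$ attains a positive minimum $\delta_I$ on $I$, whence $\ol G_i(h,q) \geq \nu\delta_I|q| - M \to \infty$ uniformly in $h \in I$ as $|q| \to \infty$.

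For the continuity I would work with the second, flow-based representation of $\ol G_i$. Fix $h_* \in J_i \cup \{h_i\}$ and $\bar x_* \in c_i(h_*)$; because $DH(\bar x_*) \neq 0$, a short transversal through $\bar x_*$ meets each nearby loop $c_i(h)$ in a single point $\bar x_h$, and the implicit function theorem makes $h \mapsto \bar x_h$ continuous on a neighborhood of $h_*$ (one-sided if $h_* = h_i$). Since $X$ is $C^1$ in $(t,x)$, $H \in C^2$, and each $c_i(h)$ is a single periodic orbit $\{X(t,\bar x_h) : t \in \R\}$, the integrand $(h,q,t) \mapsto G(X(t,\bar x_h), qDH(X(t,\bar x_h)))$ is jointly continuous, while $T_i(h)$ is continuous and positive. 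Continuity of $(h,q) \mapsto T_i(h)^{-1}\int_0^{T_i(h)} G(X(t,\bar x_h), qDH(X(t,\bar x_h)))\,dt$ then follows from the standard fact that an integral of a continuous integrand over an interval with continuously varying endpoint depends continuously on its parameters.

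The main obstacle is precisely the continuity near the included endpoint $h = h_i$ and the continuous selection of the base points $\bar x_h$: one must confirm that $c_i(h)$ remains a single regular loop varying continuously with $h$, which rests on $h_i$ being a regular value of $H$ and on the structure of the level sets recorded in Section~2. Once this continuous base-point selection is secured, the argument reduces to continuous dependence of the flow $X$ on its data together with the continuity of $T_i$ from Lemma \ref{C^1-L_iT_i}, with no further analytic difficulty.
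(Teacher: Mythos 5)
Your proposal is correct and follows essentially the same route as the paper: the bound \eqref{loc-coercive} comes from plugging $p = qDH(x)$ into \eqref{coercivity} and integrating against $dl/|DH|$ over $c_i(h)$, and the continuity comes from the flow representation of $\ol G_i$ together with the continuity of $T_i$ from Lemma \ref{C^1-L_iT_i}. The paper compresses all of this into one sentence; your write-up simply supplies the details (transversal parametrization of the loops, continuous dependence of $X$ on initial data) that the paper leaves implicit.
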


Here, $\nu$ and $M$ are the constants from \eqref{coercivity}.

\begin{proof}
Combining the definition of $\ol G_i$
with Lemma \ref{C^1-L_iT_i} yields the continuity of $\ol G_i$, and 
with \eqref{coercivity} yields \eqref{loc-coercive}.
\end{proof}

\begin{lem} \label{limmin-G_i}
We have 
\begin{equation*}
\lim_{J_i \ni h \to 0} \min_{q \in \R} \ol G_i (h, q) = G(0, 0) \ \ \ \text{ for all } i \in \cI_0.
\end{equation*}
\end{lem}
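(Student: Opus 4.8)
Fix $i \in \cI_0$. The plan is to read $\ol G_i(h,q)$ as an average against the probability measure $d\mu_h = \fr{1}{T_i(h)}\fr{dl}{|DH|}$ on $c_i(h)$ (it has total mass $1$ by \eqref{T_i}), so that $\ol G_i(h,q) = \int_{c_i(h)} G(x, qDH(x))\, d\mu_h(x)$, and to exploit that this measure concentrates at the origin as $h \to 0$. Fix $\gk \in (0,1)$ with $\ol B_\gk \sb \gO \cap V$. Since $|DH|$ is bounded below by some $\mu > 0$ on the compact set $\ol\gO_i \sm B_\gk$ (which contains no critical point of $H$), while $L_i$ is bounded and $T_i(h) \to \inft$ as $J_i \ni h \to 0$, I obtain the concentration estimate
\[
\mu_h(c_i(h) \sm B_\gk) = \fr{1}{T_i(h)} \int_{c_i(h) \sm B_\gk} \fr{dl}{|DH|} \le \fr{L_i(h)}{\mu\, T_i(h)} \longrightarrow 0, \quad \mu_h(c_i(h) \cap B_\gk) \to 1.
\]
The upper bound is then immediate from the choice $q = 0$: $\min_q \ol G_i(h,q) \le \ol G_i(h,0) = \int_{c_i(h)} G(x,0)\, d\mu_h$, and writing $\go$ for the modulus of (G2) (there denoted $m$; I rename it to avoid clashing with the exponent $m$ of (H3), and I may assume $\go$ concave and nondecreasing), the bound $|G(x,0) - G(0,0)| \le \go(|x|)$ together with the concentration estimate gives $\limsup_{h\to0}|\ol G_i(h,0) - G(0,0)| \le \go(\gk)$; letting $\gk \to 0$ yields $\limsup_{h\to0}\min_q \ol G_i(h,q) \le G(0,0)$.

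For the matching lower bound, note first that $q \mapsto \ol G_i(h,q)$ is convex and, by \eqref{loc-coercive}, coercive, so the minimum is attained at some $q_h$. The crucial a priori estimate is that $q_h$ is controlled on the scale of $T_i$: by \eqref{loc-coercive},
$\fr{\nu L_i(h)}{T_i(h)}|q_h| \le \ol G_i(h,q_h) + M \le \ol G_i(h,0) + M$, whose right-hand side is bounded, while $L_i(h)$ is bounded below by a positive constant for small $|h|$ (because $c_i(h)$ approaches $c_i(0) = \pl D_i$, whose length outside $B_\gk$ is positive). Hence $|q_h|/T_i(h) \le C_3$ for a constant $C_3$ independent of $h$. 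To bound $\ol G_i(h,q_h)$ below I would split the loop. On $c_i(h) \sm B_\gk$ I use the floor $G \ge -M$ from \eqref{coercivity}, contributing at least $-M\mu_h(c_i(h)\sm B_\gk) \to 0$. On $c_i(h) \cap B_\gk$ I use an affine minorant of the convex function $G(0,\cd)$: choosing $\xi \in \R^2$ with $G(0,p) \ge G(0,0) + \xi \cd p$ for all $p$ (a subgradient at $0$, which exists by (G3)) and combining with (G2),
\[
G(x, qDH(x)) \ge G(0,0) + q\, \xi \cd DH(x) - \go\big(|x|(1 + |q|\,|DH(x)|)\big).
\]

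Integrating this over $c_i(h) \cap B_\gk$ against $d\mu_h$, the constant term gives $G(0,0)\mu_h(c_i(h)\cap B_\gk) \to G(0,0)$. The linear term equals $\fr{q_h}{T_i(h)}\int_{c_i(h)\cap B_\gk} \xi \cd \fr{DH}{|DH|}\, dl$, whose modulus is at most $\fr{|q_h|}{T_i(h)}|\xi|\,L(c_i(h)\cap B_\gk) \le C_3|\xi|K\gk^{m-n+2}$ by Lemma \ref{length}, hence small for small $\gk$. For the modulus term I would first compute the $\mu_h$-average of its argument,
\[
\int_{c_i(h)\cap B_\gk} |x|(1 + |q_h|\,|DH|)\, d\mu_h \le \gk\,\mu_h(c_i(h)\cap B_\gk) + \fr{|q_h|}{T_i(h)}\gk\, L(c_i(h)\cap B_\gk) \le \gk + C_3 K\gk^{m-n+3},
\]
using $|x| \le \gk$ on $B_\gk$, the identity $\int |x|\,|DH|\, d\mu_h = \fr{1}{T_i(h)}\int |x|\, dl$, the bound $|q_h|/T_i(h) \le C_3$, and Lemma \ref{length} again. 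Jensen's inequality (concavity of $\go$) then bounds the modulus term by $\go$ of this small quantity. Collecting the three contributions, letting $J_i \ni h \to 0$ and then $\gk \to 0$ gives $\liminf_{h\to0}\min_q \ol G_i(h,q) \ge G(0,0)$, which together with the upper bound proves the lemma.

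I expect the main obstacle to be precisely the non-uniformity in $q$: because the coercivity constant $\nu L_i(h)/T_i(h)$ in \eqref{loc-coercive} degenerates to $0$ as $h \to 0$, the minimizer $q_h$ need not remain bounded, and then $|q_h\,DH(x)|$ is not small throughout $B_\gk$, so the modulus error cannot be controlled by naive continuity. The resolution is the scale-invariant a priori bound $|q_h|/T_i(h) \le C_3$, which pairs exactly with the length estimate $L(c_i(h)\cap B_\gk) \le K\gk^{m-n+2}$ of Lemma \ref{length} to tame both the linear and the modulus contributions; this is where the geometric input of Section 3.1, rather than any explicit formula for the flow, is needed.
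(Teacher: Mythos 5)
Your proof is correct, and the lower bound is obtained by a genuinely different route from the paper's. The paper never extracts a minimizer: it proves a tailored interpolation estimate (its \eqref{R-C_1}, hence \eqref{G-1}, with the exponent $\fr{m-n+2}{n}$ chosen so that substituting $p=qDH(x)$ produces $|q|^{\fr{m-n+2}{n}}|DH(x)|$), then splits into the regimes $|q|\leq\gg T_i(h)$ and $|q|>\gg T_i(h)$; in the latter it uses the coercivity floor $G(x,p)\geq G(0,0)$ for $|p|\geq R$ off the set $S=\{x\in c_i(h):|q|\,|DH(x)|\leq R\}$ and Lemma \ref{length} to control $L(S)$. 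You instead extract the minimizer $q_h$, derive the a priori bound $|q_h|\leq C_3T_i(h)$ from \eqref{loc-coercive}, and then use a subgradient minorant of $G(0,\cd)$ together with (G2) and Jensen; the scale $|q|\sim T_i(h)$ paired with Lemma \ref{length} is the common engine of both arguments, but your version makes the mechanism more transparent and avoids the interpolation exponents entirely. Two small points you should make explicit. First, your a priori bound needs $L_i(h)$ bounded below by a positive constant as $J_i\ni h\to0$, which is slightly more than the paper's remark that $L_i(h)$ is ``positive and bounded''; it is true and easy (for $h\in J_i$ the loop $c_i(h)$ encloses the fixed loop $c_i(h_i)$, so $L_i(h)\geq 2\,\mathrm{diam}\,c_i(h_i)>0$, and similarly for $i=0$), but it is an extra ingredient the paper's route does not require. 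Second, your Jensen step is applied to the restriction of $\mu_h$ to $c_i(h)\cap B_\gk$, which is only a sub-probability measure; this is harmless because a concave nondecreasing modulus with $\go(0)=0$ satisfies $\theta\,\go(s/\theta)\leq\go(s)$ for $\theta\in(0,1]$, but the normalization should be mentioned. With those two sentences added, the argument is complete.
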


\begin{proof}
Fix any $\gg > 0$.
We begin by noting that, 
due to the continuity and the coercivity of $G$,
there exist $R, C_1 > 0$ such that
$G(x, p) \geq G(0, 0)$ for all $(x, p) \in \ol \gO \tim (\R^n \sm B_R)$ and
\begin{equation} \label{R-C_1}
|G(x, p) - G(0, 0)| \leq \gg + C_1 (|x|^n + |p|^{\fr{m-n+2}{n}} |DH(x)|^{1 - \fr{m-n+2}{n}})
\end{equation}
for all $(x, p) \in \ol \gO \tim \ol B_R$.
Combining this with (H4) yields
\begin{equation} \label{G-1}
G(x, p) \geq G(0, 0) - \gg - C_2(|DH(x)| + |p|^{\fr{m-n+2}{n}} |DH(x)|^{1 - \fr{m-n+2}{n}})
\end{equation}
for all $(x, p) \in \ol \gO \tim \R^2$, and 
\begin{equation} \label{G-2}
|G(x, 0) - G(0, 0)| \leq \gg + C_2|DH(x)| \ \ \ \text{ for all } x \in \ol \gO
\end{equation}
for some $C_2 > 0$.

Fix any $i \in \cI_0$ and $h \in J_i$.
Using \eqref{G-2}, we get
\begin{equation} \label{G-3}
|\ol G_i (h, 0)-G(0, 0)| \leq \gg + \fr{C_2L_i(h)}{T_i(h)}.
\end{equation}

Fix any $q \in \R$ and set 
\begin{equation*}
S = \{ x \in c_i (h) \mid |x| \leq (R/c|q|)^{\fr{1}{n}} \},
\end{equation*}
where $c > 0$ is the constant from (H4).
If $x \in c_i (h) \sm S$, then, by (H4),
\begin{equation*}
\frac{R}{c|q|} < |x|^n \leq \fr{|DH(x)|}{c},
\end{equation*}
that is, $|q||DH(x)| > R$. Hence, we have
\begin{equation} \label{G-4}
G(x, qDH(x)) \geq G(0, 0) \ \ \ \text{ for all } x \in c_i (h) \sm S.
\end{equation}

Choose $\gk \in (0, 1)$ and $\gd > 0$ so that $\ol B_\gk \sb \gO \cap V$
and that if $|h| \leq \delta$, then
\begin{equation*}
\left( \fr{R}{c \gg T_i (h)} \right)^{\fr{1}{n}} < \gk.
\end{equation*}
If $|h| < \gd$ and $|q| > \gg T_i(h)$, then $(R/c|q|)^{\fr{1}{n}} < \gk$, and, hence, 
by Lemma \ref{length}, there exists a constant $K > 0$, independent of $h$, such that
\begin{equation} \label{G-5}
L \left( c_i (h) \cup B_{(R/c|q|)^{\fr{1}{n}}} \right) \leq K \left( \fr{R}{c|q|} \right)^{\fr{m-n+2}{n}}.
\end{equation}

Using \eqref{G-1} if $|q| \leq \gg T_i(h)$ and, otherwise, combining
\eqref{G-1} with \eqref{G-4} and \eqref{G-5},
we can compute, as the proof of \cite[Lemma 6.5]{K}, that
\begin{equation*}
\liminf_{J_i \ni h \to 0} \min_{q \in \R} \ol G_i (h, q) \geq G(0, 0),
\end{equation*}
while \eqref{G-3} yields
\begin{equation} \label{G-6}
\lim_{J_i \ni h \to 0} \ol G_i (h, 0) = G(0, 0).
\end{equation}
These two together complete the proof.
\end{proof}

\subsection{Properties of viscosity subsolutions of \eqref{limHJ}}

For $i \in \cI_0$, let $\cS_i^-$ (resp., $\cS_i^+$ or $\cS_i$) be the set of
all viscosity subsolutions (resp., viscosity supersolutions or viscosity solutions ) of \eqref{limHJ}.

\begin{lem} \label{extend}
Let $i \in \cI_0$ and $u \in \mathcal S_i^-$.
Then $u$ is uniformly continuous in $J_i$, and, hence,
it can be extended uniquely to $\bar J_i$ as a continuous function on $\bar J_i$.
\end{lem}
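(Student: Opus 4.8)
The plan is to reduce the statement to the single gradient estimate $|u'(h)| \le C\,T_i(h)$ for a.e. $h \in J_i$, with a constant $C$ independent of $h$, and then to exploit the integrability $T_i \in L^1(J_i)$ from \eqref{T_i-L^1}. First I would record that, by the local coercivity \eqref{loc-coercive} of $\ol G_i$ and the standard regularity theory for coercive first-order Hamilton--Jacobi equations, every $u \in \cS_i^-$ is locally Lipschitz continuous in $J_i$; in particular $u$ is differentiable a.e., and at each point of differentiability the subsolution inequality $\gl u(h) + \ol G_i(h, u'(h)) \le 0$ holds. Combining this with \eqref{loc-coercive} gives
\begin{equation*}
\fr{\nu L_i(h)}{T_i(h)} \, |u'(h)| \le M - \gl u(h) \ \ \ \text{ for a.e. } h \in J_i,
\end{equation*}
while the trivial consequence $\ol G_i \ge -M$ of \eqref{loc-coercive} forces $\gl u(h) \le -\ol G_i(h, u'(h)) \le M$, that is, $u \le M/\gl$ on $J_i$ (here $\nu, M$ are the constants from \eqref{coercivity}).

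The difficulty is that $T_i(h) \to \inft$ as $J_i \ni h \to 0$ by Lemma \ref{order-T_i}, so the local Lipschitz constant degenerates at the singular endpoint $h = 0$; to control it I would first bound $u$ from below near $0$. Set $w = M - \gl u \ge 0$, which is again locally Lipschitz and satisfies $|w'(h)| = \gl |u'(h)| \le \tfrac{\gl T_i(h)}{\nu L_i(h)} \, w(h)$ for a.e. $h$. Since $L_i$ is positive and continuous on $J_i$ with $\liminf_{J_i \ni h \to 0} L_i(h) \ge L(c_i(0)) > 0$, it is bounded below by a constant $\ell > 0$, so the coefficient $\tfrac{\gl T_i}{\nu L_i}$ belongs to $L^1(J_i)$ by \eqref{T_i-L^1}. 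A Gronwall argument (applied to $\log(w + \ep)$ and letting $\ep \to 0+$) then yields, for a fixed reference point $h_* \in J_i$,
\begin{equation*}
w(h) \le w(h_*) \exp \left( \fr{\gl}{\nu \ell} \int_{J_i} T_i \right) \ \ \ \text{ for all } h \in J_i,
\end{equation*}
so that $w$, and hence $u$, is bounded on $J_i$. Feeding this back into the first estimate produces the desired bound $|u'(h)| \le C\,T_i(h)$ with $C = (\sup_{J_i} w)/(\nu \ell)$.

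It remains to convert this into uniform continuity. For any $a, b \in J_i$ with $a < b$ one has
\begin{equation*}
|u(b) - u(a)| \le \int_a^b |u'(h)| \, dh \le C \int_a^b T_i(h) \, dh,
\end{equation*}
and since $T_i \in L^1(J_i)$, the absolute continuity of the Lebesgue integral shows that the right-hand side tends to $0$ as $b - a \to 0$ uniformly in the location of $[a, b]$; thus $u$ possesses a modulus of continuity on $J_i$ and is uniformly continuous there. A uniformly continuous function on the dense subset $J_i$ of the compact interval $\bar J_i$ extends uniquely to a continuous function on $\bar J_i$, which completes the argument. The essential point—and the only place where the degeneracy at $h = 0$ intervenes—is that the blow-up of the Lipschitz constant, encoded by $T_i(h)$, is exactly compensated by the integrability $T_i \in L^1(J_i)$; the preliminary Gronwall step is what allows the $u$-dependent quantity $M - \gl u$ on the right-hand side to be replaced by a genuine constant up to the singular endpoint, and is the step I expect to require the most care.
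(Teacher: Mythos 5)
Your proposal is correct and follows essentially the same route as the paper: derive the a.e. differential inequality $\nu L_i |u'|/T_i \leq M - \gl u$ from the local coercivity \eqref{loc-coercive}, apply Gronwall to bound $u$ on $J_i$, obtain the gradient bound $|u'| \leq C\,T_i/L_i$, and conclude uniform continuity from $T_i \in L^1(J_i)$. Your additional remarks (that $M - \gl u \geq 0$, that $L_i$ is bounded below, and the absolute-continuity argument for the modulus) merely make explicit steps the paper leaves implicit.
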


\begin{proof}
The proof is along the same lines as that of \cite[Lemma 3.2]{K}.
By \eqref{loc-coercive}, we have
\begin{equation}
\gl u + \fr{\nu L_i}{T_i} |u'| - M \leq 0 \ \ \ \text{ in } J_i
\end{equation}
in the viscosity sense and, hence, in the almost everywhere sense.
Gronwall's inequality yields, for any $h, a \in J_i$,
\begin{equation} \label{bound-u-M}
|\gl u(h) - M| \leq |\gl u(a) - M| \exp \int_{J_i} \frac{\gl T_i (s)}{\nu L_i (s)} \, ds.
\end{equation}
Recalling that $T_i \in L^1(J_i)$, inequality \eqref{bound-u-M} shows a boundedness of $u$ in $J_i$ and, moreover,
\begin{equation} \label{unif-cont}
|u' (h)| \leq \frac{T_i (h)}{\nu L_i (h)} \left( M + \gl \sup_{J_i} |u| \right) \ \ \ \text{ for a.e. } h \in J_i
\end{equation}
yields the uniformly continuity of $u$ in $J_i$.
\end{proof}

Thanks to this lemma, we may assume that
any $u \in \cS_i^-$ is a function in $C(\bar J_i)$.
To make this notationally explicit, we write $\cS_i^- \cap C(\bar J_i)$ for $\cS_i^-$.
This also applies to $\cS_i$ since $\cS_i \sb \cS_i^-$.

The following two lemmas are direct consequences of, respectively, \eqref{unif-cont} and \eqref{bound-u-M}.

\begin{lem} \label{equi-conti-S_i^-}
Let $i \in \cI_0$ and $\cS \sb \cS_i^-$.
Assume that $\cS$ is uniformly bounded on $\bar J_i$.
Then $\cS$ is equi-continuous on $\bar J_i$.
\end{lem}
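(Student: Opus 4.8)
The plan is to manufacture, out of the uniform bound on $\cS$, a \emph{single} modulus of continuity that works for every member of the family at once. The engine is estimate \eqref{unif-cont} from the proof of Lemma \ref{extend}, which bounds $|u'|$ pointwise by a quantity that depends on $u$ only through $\sup_{J_i}|u|$. First I would record the uniform bound: since $\cS$ is uniformly bounded on $\bar J_i$, put $B := \sup_{u \in \cS} \sup_{\bar J_i} |u| < \inft$. Feeding this into \eqref{unif-cont} gives, for every $u \in \cS$,
\begin{equation*}
|u'(h)| \le \fr{T_i(h)}{\nu L_i(h)}\,(M + \gl B) =: \psi(h) \ \ \ \text{ for a.e. } h \in J_i,
\end{equation*}
where the dominating function $\psi$ no longer depends on $u$.

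Next I would verify that $\psi \in L^1(J_i)$. The factor $T_i$ lies in $L^1(J_i)$ by \eqref{T_i-L^1}, which is exactly the payoff of Lemma \ref{order-T_i}, while $1/L_i$ is bounded above because $L_i$ is continuous and strictly positive on $\bar J_i$ (its limiting value being the positive length of the separatrix loop $c_i(0)$), hence bounded below by some $\ell_i > 0$; thus $\psi \le (M + \gl B) T_i / (\nu \ell_i) \in L^1(J_i)$. Since each $u \in \cS$ is absolutely continuous on $J_i$ with $u' \in L^1$, integrating the pointwise bound yields, for all $h_1, h_2 \in J_i$ and all $u \in \cS$,
\begin{equation*}
|u(h_2) - u(h_1)| \le \Big| \int_{h_1}^{h_2} \psi(s)\, ds \Big|.
\end{equation*}
The right-hand side is a modulus of continuity common to the whole family: by the absolute continuity of the Lebesgue integral of the \emph{fixed} $L^1$ function $\psi$, it tends to $0$ as $|h_2 - h_1| \to 0$, uniformly in $u$. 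Finally, since by Lemma \ref{extend} each $u \in \cS$ extends continuously to $\bar J_i$, this inequality passes to the closed interval by taking limits in $h_1, h_2$, and equi-continuity on $\bar J_i$ follows.

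The only genuinely delicate point, and the step I expect to be the main obstacle, is the integrability of $\psi$ near the separatrix level $h = 0$, where $T_i(h) \to \inft$. It is precisely the $L^1$-control of this blow-up of the minimal period (Lemma \ref{order-T_i} together with \eqref{T_i-L^1}) that rescues a uniform modulus of continuity there; without it the modulus could degenerate as $h_1, h_2 \to 0$. Accordingly, before invoking the estimate I would make sure the positive lower bound $\ell_i > 0$ on $L_i$ up to $h = 0$ is properly justified from the continuity and strict positivity of $L_i$ on $\bar J_i$, since it is this bound that converts $T_i \in L^1$ into $\psi \in L^1$.
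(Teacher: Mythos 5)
Your proof is correct and is exactly the argument the paper intends: the paper states that this lemma is a direct consequence of the gradient bound \eqref{unif-cont}, and your proposal simply fleshes out that one-line proof by substituting the uniform bound $B$ into \eqref{unif-cont}, checking $T_i/L_i \in L^1(J_i)$ via \eqref{T_i-L^1} and the positive lower bound on $L_i$, and integrating to get a common modulus of continuity.
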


\begin{lem} \label{bound-S_i^-}
Let $i \in \cI_0$ and $u \in \cS_i^- \cap C(\bar J_i)$.
Then there exists a constant $K > 0$, independent of $u$, such that
\begin{equation*}
|u(h)| \leq K(|u(a)| + 1) \ \ \ \text{ for all } h, a \in \bar J_i.
\end{equation*} 
\end{lem}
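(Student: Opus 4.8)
The plan is to read the conclusion off directly from the Gronwall estimate \eqref{bound-u-M} established inside the proof of Lemma \ref{extend}; indeed the lemma is advertised as a ``direct consequence'' of that inequality. Recall that \eqref{bound-u-M} states, for all $h, a \in J_i$,
\begin{equation*}
|\gl u(h) - M| \leq |\gl u(a) - M| \exp \int_{J_i} \frac{\gl T_i (s)}{\nu L_i (s)} \, ds,
\end{equation*}
and, since the representative we have fixed satisfies $u \in C(\bar J_i)$, this inequality extends by continuity to all $h, a \in \bar J_i$.

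The first step is to verify that the exponential factor
\begin{equation*}
E := \exp \int_{J_i} \frac{\gl T_i (s)}{\nu L_i (s)} \, ds
\end{equation*}
is a finite constant depending only on the data, and in particular independent of the chosen $u \in \cS_i^-$. This rests on two facts already at our disposal. On the one hand, $T_i \in L^1(J_i)$ by Lemma \ref{order-T_i} (see \eqref{T_i-L^1}), so that $T_i$ blows up at $h = 0$ only at an integrable rate. On the other hand, $L_i$ is positive and bounded on $J_i$, and its infimum over $J_i$ is strictly positive: near $h = h_i$ the loop $c_i(h)$ approaches the outer boundary $\pl_i \gO$, while near $h = 0$ it approaches the loop $c_i(0)$, both of which have positive length. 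Consequently the integrand $T_i/L_i$ is integrable on $J_i$ and $E < \inft$.

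The second step is purely algebraic. Using the triangle inequality, the displayed bound, and $M > 0$, I would estimate, for any $h, a \in \bar J_i$,
\begin{equation*}
\gl |u(h)| \leq |\gl u(h) - M| + M \leq E |\gl u(a) - M| + M \leq E \gl |u(a)| + (E+1)M,
\end{equation*}
whence
\begin{equation*}
|u(h)| \leq E |u(a)| + \frac{(E+1)M}{\gl}.
\end{equation*}
Setting $K = \max \{ E, (E+1)M/\gl \}$, which is independent of $u$, yields $|u(h)| \leq K(|u(a)| + 1)$ for all $h, a \in \bar J_i$, as claimed. There is no serious obstacle here; the only point demanding genuine care is the finiteness of $E$, i.e.\ confirming simultaneously that $L_i$ is bounded below by a positive constant on $J_i$ and that the singularity of $T_i$ at $h = 0$ is only of $L^1$ type, both of which are furnished by the geometric description of $c_i(0)$ and by Lemma \ref{order-T_i}.
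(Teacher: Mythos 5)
Your proof is correct and follows exactly the route the paper intends: the paper dispenses with this lemma by declaring it a ``direct consequence'' of the Gronwall estimate \eqref{bound-u-M}, and your write-up simply supplies the omitted details (finiteness of the exponential factor via $T_i \in L^1(J_i)$ and $\inf_{J_i} L_i > 0$, the triangle-inequality algebra, and the extension to $\bar J_i$ by continuity). No discrepancy with the paper's argument.
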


\begin{lem} \label{u-G0}
Let $i \in \cI_0$ and $u \in \cS_i^- \cap C(\bar J_i)$. 
Then, we have $\gl u(0) + G(0, 0) \leq 0$.
\end{lem}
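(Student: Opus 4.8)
The plan is to combine the almost-everywhere form of the subsolution inequality with the endpoint limit computed in Lemma \ref{limmin-G_i}. Since $u \in \cS_i^- \cap C(\bar J_i)$, the estimate \eqref{unif-cont} shows that $u$ is locally Lipschitz on $J_i$; hence, by Rademacher's theorem, $u$ is differentiable at a.e.\ $h \in J_i$, and at every such point the viscosity subsolution property of \eqref{limHJ} yields
\begin{equation*}
\gl u(h) + \ol G_i(h, u'(h)) \leq 0.
\end{equation*}

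Next I would eliminate the dependence on $u'$ by bounding the Hamiltonian below by its minimum in the $q$-variable: for a.e.\ $h \in J_i$,
\begin{equation*}
\gl u(h) + \min_{q \in \R} \ol G_i(h, q) \leq \gl u(h) + \ol G_i(h, u'(h)) \leq 0.
\end{equation*}
I would then select a sequence $h_k \in J_i$ with $h_k \to 0$ along which this inequality holds (possible precisely because it holds a.e.) and pass to the limit. By the continuity of $u$ on $\bar J_i$ we have $u(h_k) \to u(0)$, and by Lemma \ref{limmin-G_i} we have $\min_{q \in \R} \ol G_i(h_k, q) \to G(0, 0)$. Combining these two limits gives $\gl u(0) + G(0, 0) \leq 0$, which is the assertion.

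The only point requiring slight care is the behaviour near the degenerate endpoint $h = 0$. The Lipschitz bound \eqref{unif-cont} degenerates there, since $T_i(h) \to \inft$ as $J_i \ni h \to 0$, so $u'$ may be unbounded near the origin; however, the bound stays finite on each compact subinterval of $J_i$, so $u$ is still differentiable a.e.\ on all of $J_i$ and the a.e.\ inequality above is valid throughout $J_i$. Consequently no uniform control of $u'$ up to the endpoint is needed: it suffices to extract the good sequence $h_k$ and invoke the continuity of $u$ together with Lemma \ref{limmin-G_i}. I regard this transition from the viscosity inequality to its a.e.\ form on the degenerate interval as the main (though minor) technical obstacle; the remainder is a direct limit passage.
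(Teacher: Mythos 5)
Your proposal is correct and follows essentially the same route as the paper: both reduce to the test-function-free inequality $\gl u(h) + \min_{q \in \R} \ol G_i(h,q) \leq 0$ on $J_i$ and then pass to the limit $h \to 0$ using the continuity of $u$ and Lemma \ref{limmin-G_i}. The only (immaterial) difference is that the paper asserts this inequality for all $h \in J_i$ directly from the viscosity subsolution property, whereas you obtain it a.e.\ via local Lipschitz continuity and Rademacher's theorem and then extract a sequence $h_k \to 0$.
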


\begin{proof}
Fix $i \in \cI_0$.
Since $u \in \cS_i^- \cap C(\bar J_i)$,
we have
\begin{equation*}
\gl u (h) + \min_{q \in \R} \ol G_i (h, q) \leq 0 \ \ \ \text{ for all } h \in J_i.
\end{equation*}
Using Lemma \ref{limmin-G_i},
we conclude that $\gl u(0) + G(0, 0) \leq 0$.
\end{proof}

\begin{lem} \label{nu_i^d} 
Let $i \in \cI_0$ and $u \in \cS_i^- \cap C(\bar J_i)$. 
Set $d \in (-\inft, u(0))$ and
\begin{equation*} 
\nu_i^d (h) = \sup \{ v (h) \mid v \in \cS_i^- \cap C(\bar J_i), \ v(0) = d \} \ \ \ \text{ for } h \in \bar J_i.
\end{equation*}
Then there exists $\gd > 0$ such that
\begin{equation*}
\nu_i^d (h) > d \ \ \ \text{ for all } h \in J_i \cap [-\gd, \gd].
\end{equation*}
\end{lem}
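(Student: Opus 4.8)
\emph{The plan is} to exploit that the prescribed node value $d$ lies strictly below the ``critical level'' of the equation, and to use this gap to lift the \emph{maximal} subsolution $\nu_i^d$ off the constant $d$ in a one-sided neighborhood of $h=0$.

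\textbf{Step 1 (strict inequality at the node).} First I would apply Lemma \ref{u-G0} to $u$, obtaining $\gl u(0)+G(0,0)\le 0$. Since $d<u(0)$, this gives
\[
\gl d + G(0,0) < 0 ,
\]
and I set $\gb:=-\tfrac12(\gl d+G(0,0))>0$, so that $\gl d+G(0,0)=-2\gb$.

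\textbf{Step 2 (admissible slopes near the node).} Next I record that for each fixed $q\in\R$ one has $\lim_{J_i\ni h\to 0}\ol G_i(h,q)=G(0,0)$. This follows by the very splitting of $c_i(h)$ used in \eqref{G-6} and Lemma \ref{limmin-G_i}: on $c_i(h)\sm\ol B_\gk$ one has $|DH|\ge\mu>0$, so that part contributes $O(1/T_i(h))\to 0$ by $T_i(h)\to\inft$, while on $c_i(h)\cap B_\gk$ both $x$ and $qDH(x)$ are small, whence $G(x,qDH(x))$ is uniformly close to $G(0,0)$. Fixing $\gs>0$ small and letting $\gs_i:=\gs$ for $i=0$ and $\gs_i:=-\gs$ for $i\in\cI_1$ (so $\gs_i$ points into $J_i$), there is $\gd_0>0$ with $\ol G_i(h,\gs)\le G(0,0)+\gb$ and $\ol G_i(h,-\gs)\le G(0,0)+\gb$ for $h\in J_i\cap[-\gd_0,\gd_0]$. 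I then consider, for small $\gd_1$, the ``tent'' $\ell\in C(\bar J_i)$ with $\ell(0)=d$ that rises into $J_i$ with slope $\gs$ up to a peak $d+\gs\gd_1$ at the point with $|h|=\gd_1$ and then falls with slope $\gs$ (so $|\ell'|=\gs$ throughout). Where $\ell$ is smooth and $|h|\le\gd_0$,
\[
\gl\ell(h)+\ol G_i(h,\ell'(h)) \le \gl(d+\gs\gd_1)+G(0,0)+\gb = -\gb+\gl\gs\gd_1 < 0
\]
once $\gl\gs\gd_1<\gb$; and at the upward corner the superdifferential is the interval between the two one-sided slopes $\pm\gs$, so by convexity of $q\mapsto\ol G_i(h,q)$ the subsolution inequality holds there too. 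Thus $\ell$ is a viscosity subsolution on $J_i\cap(-\gd_0,\gd_0)$, with $\ell(h)>d$ for $0<|h|<2\gd_1$.

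\textbf{Step 3 (conclusion via maximality).} The family $\{v\in\cS_i^-\cap C(\bar J_i):v(0)=d\}$ is uniformly bounded (Lemma \ref{bound-S_i^-}) and equi-continuous (Lemma \ref{equi-conti-S_i^-}), so $\nu_i^d$ is continuous, $\nu_i^d(0)=d$, and $\nu_i^d\in\cS_i^-$. By continuity at $0$ I may shrink $\gd_1$ so that $|\nu_i^d(h)-d|<\gs\gd_1$ on $J_i\cap(-\gd_0,\gd_0)$; then the descending branch of $\ell$, which leaves the peak strictly above $\nu_i^d$, must meet $\nu_i^d$ at a point $h_\ast$ with $\gd_1<|h_\ast|\le 3\gd_1\le\gd_0$, still inside the admissible region. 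Setting $w:=\max(\nu_i^d,\hat\ell)$, where $\hat\ell$ equals $\ell$ for $|h|\le|h_\ast|$ and $-\inft$ beyond, one checks that $w$ is a subsolution on $J_i\cap(-\gd_0,\gd_0)$ (a maximum of subsolutions), equals $\nu_i^d$ for $|h|>|h_\ast|$ (there $\hat\ell<\nu_i^d$), is continuous because $\ell(h_\ast)=\nu_i^d(h_\ast)$, and satisfies $w(0)=d$. Hence $w$ is an admissible competitor and $\nu_i^d\ge w\ge\nu_i^d$, forcing $\nu_i^d=w\ge\ell$ between the node and $h_\ast$. In particular $\nu_i^d(h)\ge\ell(h)>d$ for all $h\in J_i\cap[-\gd,\gd]$ with $\gd:=\gd_1$. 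The case $i\in\cI_1$ is identical with the slope sign reversed.

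\textbf{The hard part} is Step 3: manufacturing a genuinely \emph{global} competitor in $\cS_i^-\cap C(\bar J_i)$ rather than a mere local subsolution, given that $\ol G_i$ loses coercivity as $h\to0$ (so slopes of subsolutions are uncontrolled there) and that no growth bound on $G$ in $p$ is available to extend $\ell$ by brute force. The resolution is to glue the tent to $\nu_i^d$ itself at a crossing point that the continuity $\nu_i^d(0)=d$ forces to lie close to the node, with convexity of $\ol G_i(h,\cdot)$ ensuring the tent's corner is admissible.
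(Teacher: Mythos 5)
Your Steps 1 and 2 are sound: Lemma \ref{u-G0} does give $\gl d+G(0,0)<0$, the limit $\ol G_i(h,q)\to G(0,0)$ for fixed $q$ follows from the same splitting as in \eqref{G-6} and Lemma \ref{limmin-G_i}, and convexity of $q\mapsto \ol G_i(h,q)$ (from (G3)) does handle the concave corner, so the tent $\ell$ is a genuine viscosity subsolution on $J_i\cap(-\gd_0,\gd_0)$. These are exactly the ingredients the paper points to (it defers the proof to \cite[Lemma 6.8]{K}, citing \eqref{G-6} and Lemma \ref{u-G0}), so your setup is on the right track.

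The gap is in Step 3, precisely at the point you yourself flag as the hard part. You need the descending branch of $\ell$ to meet $\nu_i^d$ at some $h_\ast$ with $|h_\ast|<\gd_0$, and you try to force this by ``shrinking $\gd_1$ so that $|\nu_i^d(h)-d|<\gs\gd_1$ on $J_i\cap(-\gd_0,\gd_0)$.'' This does not work. First, the logic is reversed: shrinking $\gd_1$ makes the required bound $\gs\gd_1$ \emph{smaller} while the set $J_i\cap(-\gd_0,\gd_0)$ stays fixed, so the condition becomes harder, not easier, to satisfy. Second, even if you only ask for $|\nu_i^d(h)-d|<\gs\gd_1$ on $|h|\le 3\gd_1$ (which is all the crossing argument needs), you are demanding a \emph{linear} modulus of continuity of $\nu_i^d$ at $h=0$ with constant less than $\gs$. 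The only modulus available from the paper is \eqref{unif-cont} combined with Lemma \ref{order-T_i}, namely $|(\nu_i^d)'(h)|\lesssim T_i(h)/L_i(h)\sim |h|^{-n/(m+2)}$, which integrates to a H\"older modulus of exponent $(m+2-n)/(m+2)<1$; for such a modulus $\sup_{|h|\le 3\gd_1}|\nu_i^d-d|$ is of order $\gd_1^{(m+2-n)/(m+2)}\gg \gs\gd_1$ as $\gd_1\to 0$, so no admissible $\gd_1$ need exist. Without the crossing, $w=\max(\nu_i^d,\hat\ell)$ is not known to be continuous, nor a subsolution near the cutoff point, so it is not an admissible competitor and the maximality of $\nu_i^d$ cannot be invoked. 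Note also that the obvious repairs fail for structural reasons: you cannot steepen the descent to force an early crossing, because away from $h=0$ the Hamiltonian $\ol G_i$ \emph{is} coercive (by \eqref{loc-coercive} on compact subintervals of $J_i$), so a steeply decreasing affine function is no longer a subsolution there; and replacing $\nu_i^d$ by another competitor $v_0$ with $v_0(0)=d$ runs into the same sublinear-dip problem. A correct argument must control where $\nu_i^d$ (or the chosen competitor) returns to the level of the tent by some additional mechanism — e.g. exploiting that $\nu_i^d$ is also a supersolution, so that it admits no interior local minimum with value below $d+\gb/\gl$ near $h=0$ — rather than by bare continuity at the node.
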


The proof of this lemma is along the same lines as that of \cite[Lemma 6.8]{K}
with help of formula \eqref{G-6} and Lemma \ref{u-G0}, so we omit giving it here.

An important remark on $\cS_i^-$ is that
if $u \in \cS_i^-$, then $u - a \in \cS_i^-$ for any constant $a > 0$.
From this remark, the sets of all $v \in \cS_i^-$ satisfying $v(0) < u(0)$ are non-empty, and, hence,
by Lemmas \ref{equi-conti-S_i^-} and \ref{bound-S_i^-},
these are uniformly bounded and equi-continuous on $\bar J_i$.
The functions $\nu_i^d$ are thus well-defined as continuous functions on $\bar J_i$ and,
according to Perron's method, these are solutions of \eqref{limHJ}.

\section{Proof of Theorem \ref{main}}

Note that the stability of viscosity solutions yields
\begin{equation*} 
-b \cd Dv^+ \leq 0 \ \ \ \text{ and } \ \ \ -b \cd Dv^- \geq 0 \ \ \ \text{ in } \gO
\end{equation*}
in the viscosity sense,
which show that $v^+$ and $v^-$ are, respectively, nondecreasing and nonincreasing
along the trajectory $\{ X(t, x) \}_{t \in \R}$, and, hence,
they are constant on the loop $c_i (h)$, $h \not= 0$.

The relations
\begin{equation*} \label{u_i^pm}
u_i^\pm (h) \in v^\pm (c_i (h))
\end{equation*}
define functions $u_i^\pm$ in $J_i$.
It is easy to check that $u_i^+$ and $u_i^-$ are, respectively,
upper and lower semicontinuous in $J_i$.

\begin{thm} \label{convergence}
For all $i \in \cI_0$, $u_i^+ \in \cS_i^-$ and $u_i^- \in \cS_i^+$.
\end{thm}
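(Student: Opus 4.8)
The plan is to establish both statements by the \emph{perturbed test function method}, following the scheme of \cite{K} but replacing the explicit solution of \eqref{HS} near the contact loop by the geometric averaging encoded in $\ol G_i$. By symmetry it suffices to explain the subsolution claim $u_i^+ \in \cS_i^-$; the supersolution claim for $u_i^-$ is obtained by reversing all inequalities and using that $u^\ep$ is a supersolution of \eqref{epHJ}. Recall that, by the monotonicity of $v^\pm$ along the flow established above, $v^+ = u_i^+ \circ H$ on $\gO_i$, so a test function of the form $\phi \circ H$ with $\phi \in C^1(J_i)$ is the natural object to feed into \eqref{epHJ}. First I would fix $i \in \cI_0$ and $h_0 \in J_i$ and argue by contradiction: supposing that $u_i^+ - \phi$ has a local maximum at $h_0$ with $\gl u_i^+(h_0) + \ol G_i(h_0, q_0) = \gth > 0$, where $q_0 = \phi'(h_0)$, I would derive a violation of the subsolution property of $u^\ep$.

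The heart of the argument is the construction of a \emph{corrector} $w$ near the loop $c_i(h_0)$. Since $h_0 \in J_i$, the loop contains no critical point of $H$, so $|DH|$ is bounded below there and the flow of \eqref{HS} is a $C^1$ diffeomorphism in action--time coordinates on a tubular neighborhood. On $c_i(h_0)$ I would define $w$ by integrating along a trajectory,
\[ w(X(t, \bar x)) = \int_0^t \big[\, G(X(s, \bar x), q_0 DH(X(s, \bar x))) - \ol G_i(h_0, q_0) \,\big]\, ds, \]
which is well defined (periodic in $t$) precisely because the integrand has zero mean over the minimal period $T_i(h_0)$ --- this is exactly the content of the definition of $\ol G_i$. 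Extending $w$ to the tubular neighborhood so that it is constant in the transverse (action) variable yields $w \in C^1$, using only the $C^1$ regularity of the flow and the continuity of $G$, with no explicit formula for $X$. Setting $\Phi^\ep = \phi \circ H + \ep w$ and using $b \cd DH = 0$, I compute
\[ -\fr{b \cd D\Phi^\ep}{\ep} + G(x, D\Phi^\ep) = \ol G_i(h_0, q_0) + o(1) \]
as $\ep \to 0$ and $x \to c_i(h_0)$, where the singular transport term cancels the \emph{pointwise} value $G(x, q_0 DH)$ and leaves the average; here (G2) and $\ep Dw \to 0$ absorb the replacement of $\phi'(H) DH$ by $q_0 DH$.

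It then remains to run the standard half-relaxed limit comparison. Because both $v^+$ and $\phi \circ H$ are constant along $c_i(h_0)$, the maximum of $v^+ - \Phi^\ep$ is attained along the whole loop rather than at an isolated point, so I would first localize by a small penalization near a chosen point of $c_i(h_0)$; this produces maximum points $x_\ep$ of $u^\ep - \Phi^\ep$ with $x_\ep \to c_i(h_0)$, $H(x_\ep) \to h_0$, and $u^\ep(x_\ep) \to u_i^+(h_0)$. Writing the viscosity subsolution inequality for $u^\ep$ at $x_\ep$ with the $C^1$ test function $\Phi^\ep$ and passing to the limit with the identity above gives $\gl u_i^+(h_0) + \ol G_i(h_0, q_0) \le 0$, contradicting $\gth > 0$.

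The step I expect to be the main obstacle is the corrector construction: guaranteeing that $w$ is genuinely $C^1$ including the transverse direction, and that the approximate transport identity holds with a uniform $o(1)$ as $x \to c_i(h_0)$, \emph{without} an explicit parametrization of the flow. This is where the regular action--time coordinates on the non-critical loop and the zero-mean property built into $\ol G_i$ do the work that the saddle normal form \eqref{saddle} did in \cite{K}; the subtlety that the contact set is an entire loop, handled by localization, is the secondary technical point.
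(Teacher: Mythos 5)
Your proposal is correct and is essentially the argument the paper intends: the paper itself skips this proof, citing the perturbed test function of \cite{K} (Theorem 3.6 there), and your corrector $w$ --- built by integrating $G(\cdot, q_0 DH)-\ol G_i(h_0,q_0)$ along a trajectory on the non-critical loop $c_i(h_0)$, where periodicity follows from the zero-mean property defining $\ol G_i$ and regularity from the $(t,s)$ coordinates as in the proof of Lemma \ref{key} --- is exactly that perturbed test function. The localization to handle the fact that the contact set is a whole loop is the same standard device used in \cite{K}.
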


We can prove this theorem by using the same perturbed test function
as that of \cite[Theorem 3.6]{K}, so we skip the proof.

Thanks to this theorem and Lemma \ref{extend},
we may assume that $u_i^+ \in C(\bar J_i)$ for all $i \in \cI_0$.
Moreover, by (G6), we have
\begin{equation} \label{u_i^+-d_i}
u_i^+ (h_i) = \lim_{J_i \ni h \to h_i} u_i^- (h) = d_i \ \ \ \text{ for all } i \in \cI_0.
\end{equation}

In view of Theorem \ref{convergence} and \eqref{u_i^+-d_i},
to prove Theorem \ref{main}, it is enough to show that
\begin{equation} \label{enough}
v^+ (x) \leq d \leq v^- (x) \ \ \ \text{ for all } x \in c_0 (0)
\end{equation}
for some $d \in \R$.
Indeed, if \eqref{enough} is satisfied, by the semicontinuity of $v^\pm$, we have
\begin{equation*}
u_i^+ (0) \leq v^+ (x) \leq v^- (x) \leq \lim_{J_i \ni h \to 0} u_i^-(h)
\ \ \ \text{ for all } x \in c_0 (0) \text{ and } i \in \cI_0.
\end{equation*}
Thus, by the comparison principle applied to \eqref{limHJ},
we see that $u_i^+ = u_i^-$ in $J_i$ for all $i \in \cI_0$.
Hence, setting $u_i = u_i^+$ on $\bar J_i$,
we find that $u_i \in \cS_i \cap C(\bar J_i)$, $u_i (h_i) = d_i$, and $u_1 (0) = \lds = u_{N-1} (0) = d$.
Thus, we have $v^- = v^+$ on $\ol \gO \sm c_0(0)$.
Furthermore, by the definition of the half-relaxed limits and \eqref{enough}, we have
\begin{equation*}
v^- = v^+ \ \ \ \text{ on } c_0(0),
\end{equation*}
from which we conclude that, as $\ep \to 0+$,
\begin{equation*}
u^\ep \to v^+ = v^- = u_i \circ H \ \ \ \text{ on } \ol \gO_i.
\end{equation*}

The proof of \eqref{enough} can be done as an obvious combination of the two lemmas below.

\begin{lem} \label{v^--d^ast} 
Set $d = \min_{i \in \cI_0} u_i^+ (0)$. Then
\begin{equation} \label{v^-}
v^- (x) \geq d \ \ \ \text{ for all } x \in c_0 (0). 
\end{equation}
\end{lem}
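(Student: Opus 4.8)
The plan is to first collapse the assertion to a single inequality at the node and then treat that inequality as the genuine difficulty.

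\emph{Reduction to the node.} Recall from the description of the flow that $c_0(0)=\bigcup_{i\in\cI_1}c_i(0)$ and that, for each $i\in\cI_1$ and each $\bar x_i\in c_i(0)\sm\{0\}$, the orbit $t\mapsto X(t,\bar x_i)$ sweeps out $c_i(0)\sm\{0\}$ with $\lim_{t\to\pm\inft}X(t,\bar x_i)=0$. Since $-b\cd Dv^-\geq0$ in the viscosity sense, $v^-$ is nonincreasing along $t\mapsto X(t,x)$; hence $t\mapsto v^-(X(t,\bar x_i))$ is nonincreasing and, as $t\to+\inft$, decreases to $\inf_{t}v^-(X(t,\bar x_i))$. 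Because $X(t,\bar x_i)\to0$ and $v^-$ is lower semicontinuous, $v^-(0)\leq\liminf_{t\to+\inft}v^-(X(t,\bar x_i))=\inf_{t}v^-(X(t,\bar x_i))\leq v^-(\bar x_i)$. Thus $v^-(x)\geq v^-(0)$ for every $x\in c_0(0)$, and it remains only to prove the node inequality $v^-(0)\geq d$.

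\emph{The node inequality.} This is the crux, and I would approach it by ruling out $v^-(0)=c<d=\min_{i\in\cI_0}u_i^+(0)$. In that case $c<u_i^+(0)$ for every $i\in\cI_0$, so Lemma \ref{nu_i^d} (applied with $u=u_i^+$ and the value $c$) supplies solutions $\nu_i^c\in\cS_i$ pinned at the node, $\nu_i^c(0)=c$, that separate strictly from $c$, namely $\nu_i^c(h)>c$ on $J_i\cap[-\gd,\gd]$ for some $\gd>0$. Setting $w=\nu_i^c\circ H$ on each $\ol\gO_i$ yields a subsolution of the limiting equation near the origin whose value at $0$ matches $v^-(0)$ but which lies strictly above $c$ on the nearby loops. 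Comparing this subsolution against the supersolution $v^-$ on a small punctured neighbourhood of the origin — using $v^-\geq v^-(0)=c$ from the reduction above together with the strict gap $\nu_i^c>c$ — is expected to contradict the supersolution property of $v^-$ at $0$, forcing $v^-(0)\geq d$. Equivalently, via the control representation \eqref{value}, every admissible trajectory leaving a small neighbourhood of the origin must eventually run along one of the edges, whose cheapest value is $d$, which is the probabilistic reason the node value cannot drop below $d$.

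\emph{Main obstacle.} The delicate point is precisely this node analysis: for $N\geq4$ the origin is a degenerate critical point, so — unlike the nondegenerate saddle $H=x_1^2-x_2^2$ of \cite{K}, where the flow near the node is known explicitly — no convenient formula for $X(t,x)$ is available. The comparison near the node must therefore be driven by the geometric integral estimates established above: the integrability $T_i\in L^1(J_i)$ of Lemma \ref{order-T_i}, which already underpins the continuity of the edge solutions up to $h=0$ through \eqref{T_i-L^1}, and the length bound $L(c_i(h)\cap B_r)\leq Kr^{m-n+2}$ of Lemma \ref{length} together with the limit \eqref{G-6} for $\ol G_i$, which control the Hamiltonians $\ol G_i$ uniformly as $h\to0$. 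Once these replace the explicit computations of \cite{K}, the surviving steps follow that reference.
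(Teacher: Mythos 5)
Your reduction to the node value $v^-(0)$ (via the monotonicity of $v^-$ along trajectories and lower semicontinuity) is sound, and your decision to argue by contradiction through Lemma \ref{nu_i^d} applied with $u=u_i^+$ matches the paper, which proves the lemma by setting $\bar d=\min_{c_0(0)}v^-$, supposing $\bar d<d$, and invoking Lemmas \ref{nu_i^d} and \ref{key} together with the scheme of \cite[Lemma 3.8]{K}. The genuine gap is at exactly the point you flag as "expected to" work: you never explain how information is transferred across the node, and the mechanism you sketch does not function. The function $w=\nu_i^c\circ H$ is not a subsolution of any equation that $v^-$ supersolves: away from the graph the only inequality available for $v^-$ is $-b\cd Dv^-\geq 0$, which is vacuous at the origin where $b(0)=0$, and $\nu_i^c\circ H$ has no subsolution property for \eqref{epHJ} near $c_0(0)$ (the averaged Hamiltonians $\ol G_i$ control only loop means, and the periods $T_i(h)$ blow up as $h\to 0$). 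There is no "supersolution property of $v^-$ at $0$" to contradict.

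What is missing is precisely Lemma \ref{key}, which you do not use at all: the corrector $\psi\in C^1(\gO(\gd))$ with $-b\cd D\psi+G(x,0)<G(0,0)+\ge$, built from the flow-box coordinates $Z_i(t,s)$ on a full neighbourhood of $c_0(0)$ covering all $N$ edges at once. This is the paper's replacement for the explicit saddle-point formula of \cite{K}, and it is what lets one return to the $\ep$-level: since $c<d\leq u_i^+(0)$ and $\gl u_i^+(0)+G(0,0)\leq 0$ (Lemma \ref{u-G0}), the perturbed function $c+\ep\psi$ (minus a small constant) is a strict classical subsolution of \eqref{epHJ} on $\gO(\gd)$ for small $\ep$, while Lemma \ref{nu_i^d} pushes the boundary values of $u^\ep$ on the loops $c_i(\pm\gd)$ strictly above $c$; comparison for \eqref{epHJ} on $\gO(\gd)$ then forces $v^-\geq c$ on $c_0(0)$, the desired contradiction. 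Your attribution of the node comparison to Lemma \ref{order-T_i}, Lemma \ref{length}, and \eqref{G-6} is misplaced: those estimates are already consumed in establishing Lemmas \ref{limmin-G_i}, \ref{u-G0}, and \ref{nu_i^d}, and do not by themselves produce a comparison function valid across the degenerate critical point.
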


\begin{lem} \label{v^+-d^ast}
Set $d = \min_{i \in \cI_0} u_i^+ (0)$. Then
\begin{equation*}
v^+ (x) \leq d \ \ \ \text{ for all } x \in c_0 (0).
\end{equation*}
\end{lem}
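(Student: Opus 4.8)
The plan is to bound $u^\ep$ from above near $c_0(0)$ directly from its control representation \eqref{value}, by steering the trajectory of \eqref{state}, at asymptotically negligible cost, onto a loop lying over the cheapest edge. Fix $\gg > 0$ and choose $i^* \in \cI_0$ with $d = u_{i^*}^+(0)$. Since $u_{i^*}^+ \in \cS_{i^*}^- \cap C(\bar J_{i^*})$ by Lemma \ref{extend}, I may pick $h^* \in J_{i^*}$ with $|h^*|$ so small that $u_{i^*}^+(h^*) < d + \gg$. Because $v^+ \equiv u_{i^*}^+(h^*)$ on the loop $c_{i^*}(h^*)$ and $v^+$ is the upper relaxed limit of $u^\ep$, a covering argument on the compact set $c_{i^*}(h^*)$ furnishes an open neighborhood $N$ of $c_{i^*}(h^*)$ and $\ep_1 > 0$ such that $u^\ep < d + 2\gg$ on $N$ for every $\ep \in (0, \ep_1)$.

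The core of the argument is a steering assertion: for each $x \in c_0(0)$ there are a neighborhood $U$ of $x$ and, for all small $\ep$ and all $y \in U \cap \ol\gO$, an admissible control $\ga$ and a time $\gth > 0$ with $X^\ep(\gth, y, \ga) \in N$, with $X^\ep(t, y, \ga) \in \ol\gO$ for $t \in [0, \gth]$, and with both $\gth$ and the running cost $\int_0^\gth L(X^\ep, \ga)e^{-\gl t}\,dt$ tending to $0$. Granting this, the dynamic programming inequality
\begin{equation*}
u^\ep(y) \le \int_0^\gth L(X^\ep(t, y, \ga), \ga(t))e^{-\gl t}\,dt + u^\ep(X^\ep(\gth, y, \ga))e^{-\gl \gth}
\end{equation*}
yields $u^\ep(y) \le o(1) + (d + 2\gg)e^{-\gl \gth} \to d + 2\gg$; hence $v^+(x) \le d + 2\gg$, and letting $\gg \to 0$ closes the proof.

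To build the control I exploit $\tfrac{d}{dt}H(X^\ep) = DH(X^\ep)\cd\ga$, valid since $H$ is a first integral of \eqref{HS}: the large drift $\ep^{-1}b$ sweeps the trajectory rapidly along level sets, while only $\ga$ changes the value of $H$. Using controls with $|\ga| \le \nu$, for which \eqref{coercivity} forces $L(\cd, \ga) \le M$, I change $H$ transversally only where $|DH|$ is bounded below. When $i^* \in \cI_1$ and $x \in \pl D_{i^*}$ I lower $H$ from $0$ to $h^*$ directly near $x$; otherwise I first raise $H$ to a small $h_+ > 0$, ride the enclosing loop $c_0(h_+)$ by pure drift (real time $\sim \ep\,T_0(h_+) \to 0$) to the arc nearest $\pl D_{i^*}$, and then lower $H$ through $0$ into $D_{i^*}$. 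Each transversal passage alters $H$ by $O(|h^*| + h_+)$, so its duration and cost vanish as $h^*, h_+ \to 0$. The main obstacle is the behavior at the degenerate node: along each fast sweep the trajectory necessarily approaches the origin, where $|DH|$ degenerates and the averaging used in \cite{K} has no explicit substitute. I expect to control the time and cost accumulated there by the arc-length bound $L(c_i(h)\cap B_r) \le K r^{m-n+2}$ of Lemma \ref{length} together with the integrability $T_i \in L^1(J_i)$ furnished by Lemma \ref{order-T_i}, coordinating $|h^*|$, $h_+$ and $\ep \to 0$ in this order (with $\ep$ small relative to $T_{i^*}(h^*)$ and $T_0(h_+)$) so that all error terms are absorbed; the uniformity of the construction over $y \in U$ is what makes the resulting bound pass to $v^+(x)$.
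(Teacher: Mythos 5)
Your overall architecture coincides with the paper's: represent $u^\ep$ by the value function \eqref{value}, invoke the dynamic programming principle, and steer the state at asymptotically negligible time and cost from near $c_0(0)$ onto a loop $c_{i^*}(h^*)$ on which $u^\ep$ is already known to be within $2\gg$ of $d$. The preliminary reductions are all sound: the continuity of $u_{i^*}^+$ up to $h=0$ via Lemma \ref{extend}, the covering argument on the compact loop, and the bound $L(x,\xi)\leq M$ for $|\xi|\leq\nu$ coming from \eqref{coercivity}.

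The difficulty is that the step you yourself label ``the main obstacle'' is precisely where the content of the proof lies, and the tools you name do not close it. What is needed is a bound, \emph{uniform in} $\ep$, on the time an individual controlled trajectory takes to traverse the layer $\gO(h)=\{|H|<h\}$ around the degenerate node. The paper supplies exactly this in Lemma \ref{tau_2-tau_1}: for the feedback controls $\ga=\pm\nu DH/|DH|$ one has $\frac{d}{dt}H(X^\ep_\pm)=\pm\nu|DH|$, and \eqref{H-DH} then gives $\frac{d}{dt}\bigl(H|H|^{-n/(m+2)}\bigr)\geq \nu c_0\frac{m-n+2}{m+2}$ along $X^\ep_+$, so the transit time is $O(h^{(m-n+2)/(m+2)})$ independently of $\ep$; this Lyapunov-type computation is the paper's substitute for the explicit saddle formula of \cite{K}. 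You never invoke Lemma \ref{tau_2-tau_1}; instead you propose to control the time accumulated near the origin with Lemma \ref{length} and the integrability $T_i\in L^1(J_i)$ from Lemma \ref{order-T_i}. Those estimates concern lengths of level sets and loop-averaged periods, not the sojourn time of a single trajectory of $\dot X=\ep^{-1}b+\ga$ near $c_0(0)$, and no coordination of $|h^*|$, $h_+$, $\ep$ is exhibited that would convert them into such a bound. A second unresolved point is the well-selection in your descent ``through $0$ into $D_{i^*}$'': while $H$ decreases from $h_+$ to $0$ the drift sweeps the trajectory along the level loops through an arc length of order $h_+/(\nu\ep)$, so for fixed $h_+$ and $\ep\to 0$ the position at the moment $H$ crosses $0$ — hence which component $D_j$ is entered — is not controlled by where on $c_0(h_+)$ the descent began; and shrinking $h_+$ with $\ep$ to suppress this winding conflicts with keeping the positioning time $\ep T_0(h_+)$ small. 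As written, the proposal reduces the lemma to exactly the claims that still require proof.
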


In order to prove these lemmas,
we need Lemmas \ref{key} and \ref{tau_2-tau_1} below.

To state Lemmas \ref{key} and \ref{tau_2-tau_1},
we set $\bar h= \min_{i \in \cI_0} |h_i|$ and,
for $h \in (0, \bar h)$, define the neighborhood $\gO (h)$ of the curve $c_0(0)$ by
\begin{equation*} \label{Omegah}
\gO_i (h) = \{ x \in \ol \gO_i \mid |H(x)| < h \} \ \text{ for } i \in \cI_0
\ \ \text{ and } \ \ \gO (h) = \bigcup_{i \in \cI_0} \gO_i (h).
\end{equation*}

\begin{lem} \label{key}
For any $\ge > 0$, there exist $\gd \in (0, \bar h)$ and $\psi \in C^1(\gO (\gd))$ such that
\begin{equation*} \label{psi}
-b \cd D\psi + G(x, 0) < G(0, 0) + \ge \ \ \ \text{ in } \gO (\gd).
\end{equation*}
\end{lem}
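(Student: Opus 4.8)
The plan is to exploit that $b$ is tangent to the level curves of $H$. Since $b \cd DH = H_{x_2}H_{x_1} - H_{x_1}H_{x_2} = 0$, for any $\psi \in C^1$ one has $b \cd D\psi(x) = \fr{d}{dt}\psi(X(t,x))$ evaluated at $t=0$, i.e. the derivative of $\psi$ along the flow. Hence, along each loop $c_i(h)$, the quantity $-b \cd D\psi + G(x,0)$ is governed by the \emph{cohomological equation} $b \cd D\psi = G(x,0) - \ol G_i(H(x),0)$: a solution makes $-b \cd D\psi + G(x,0) \equiv \ol G_i(H(x),0)$, and by \eqref{G-6} we may fix $\gd \in (0,\bar h)$ so small that $\ol G_i(h,0) < G(0,0) + \ge$ for every $i \in \cI_0$ and $0 < |h| < \gd$. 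Thus it suffices to solve this equation on $\gO(\gd)$ with $C^1$ regularity.

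To construct $\psi$, first note that the only critical point of $H$ lying in $\gO(\gd)$ is the origin, since each minimum $z_i$ satisfies $H(z_i) < h_i < -\gd$; hence $DH \ne 0$ and the flow is regular on $\gO(\gd) \sm \{0\}$. I would fix a small ball $B_\gr$ and, on $\gO(\gd) \sm B_\gr$, define $\psi(x) = \int_0^{\gth(x)} [\, G(X(s,\gs(x)),0) - \ol G_i(H(x),0) \,]\, ds$, where $\gs(x)$ is the foot of $x$ on a transversal section lying outside $B_\gr$ and $\gth(x)$ is the (finite, locally bounded) flow time from $\gs(x)$ to $x$. This is single-valued because the integral over a full loop vanishes by the very definition of $\ol G_i(h,0)$ as a time-average, and it is $C^1$ by smooth dependence of $X$ on $(s,x)$ together with Lemma \ref{C^1-L_iT_i}; crucially, because the base points are kept away from the degenerate point, $\psi$ extends in a $C^1$ fashion across the separatrix $c_0(0)$ away from $0$, the value $\ol G_i(0,0)$ there being read as the limit $G(0,0)$. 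Finally, choosing a smooth radial cutoff $\gc$ with $\gc \equiv 0$ on $B_\gr$ and $\gc \equiv 1$ off $B_{2\gr}$, I set $\Psi = \gc\,\psi$, extended by $0$ on $B_\gr$, which is globally $C^1$ on $\gO(\gd)$.

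It then remains to check the inequality for $\Psi$. A direct computation gives $-b \cd D\Psi + G(x,0) = (1-\gc)\,G(x,0) + \gc\,\ol G_i(H(x),0) - \psi\,(b \cd D\gc)$. Off $B_{2\gr}$ the right side equals $\ol G_i(H(x),0) < G(0,0) + \ge$; on $B_\gr$ it equals $G(x,0)$, which is $< G(0,0) + \ge$ once $\gr$ is small, by continuity of $G$. On the transition annulus $B_{2\gr} \sm B_\gr$ the first two terms form a convex combination of two quantities each $< G(0,0) + \ge/2$ (again by \eqref{G-6}, \eqref{G-2} and continuity), so everything reduces to making the error term $|\psi\,(b \cd D\gc)| < \ge/2$ there. \textbf{This estimate is the main obstacle.} I would control it by combining $|b \cd D\gc| = |DH|\,|\gc'| \le C|x|^{m+1}/\gr$ from \eqref{order-DH} with the lower bound $|DH| \ge c|x|^n$ from (H4) (which, via $ds = dl/|DH|$, converts flow time into arclength) and, decisively, with the geometric length estimate $L(c_i(h) \cap B_{2\gr}) \le K(2\gr)^{m-n+2}$ of Lemma \ref{length}, which bounds the oscillation of $\psi$ in the annulus independently of $h$. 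Since near $0$ the integrand $G - \ol G_i$ is itself small, choosing $\gr$ and then $\gd$ small enough forces the error below $\ge/2$, yielding the strict inequality on all of $\gO(\gd)$. It is precisely this replacement of the explicit solution of \eqref{HS} available in \cite{K} through formula \eqref{saddle} by the geometric length bound that constitutes the heart of the argument.
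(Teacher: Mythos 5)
Your reduction to a cohomological equation along the flow is the right starting point, and you correctly flag the cutoff error as the main obstacle; but the control you propose for it fails, and this is a genuine gap. The function $\psi(x)=\int_0^{\gth(x)}[\,G(X(s,\gs(x)),0)-\ol G_i(H(x),0)\,]\,ds$ is not uniformly bounded on the annulus $B_{2\gr}\sm B_\gr$ as $H(x)=h\to 0$: for $x$ on the ``exit side'' of the annulus the flow time $\gth(x)$ from a fixed section includes the time spent inside $B_\gr$ near the degenerate critical point, which blows up like $T_i(h)=O(|h|^{-\fr{n}{m+2}})$ (Lemma \ref{order-T_i}). Lemma \ref{length} bounds arclength, not time, and the conversion $ds=dl/|DH|$ goes the wrong way near the origin, where $|DH|$ is small. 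Concretely, splitting the integrand as $[G(X(s),0)-G(0,0)]+[G(0,0)-\ol G_i(h,0)]$, the second piece contributes $\gth(x)\,|G(0,0)-\ol G_i(h,0)|$, and \eqref{G-3} only yields $|G(0,0)-\ol G_i(h,0)|\leq \gg+C_2L_i(h)/T_i(h)$ with $\gg>0$ fixed, so this is of size $\gg\,T_i(h)\to\inft$; the first piece is $\int |G(x,0)-G(0,0)|\,|DH(x)|^{-1}\,dl$ over an arc through the near-origin region, and since (G1)--(G4) give only continuity of $G$, nothing prevents divergence (take $|G(x,0)-G(0,0)|\sim 1/|\log |x||$ against $|DH(x)|\leq C|x|^{m+1}$). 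For the same reason the assertion that $\psi$ ``extends in a $C^1$ fashion across the separatrix away from $0$'' is unjustified: $\gth$ is discontinuous and unbounded across $c_0(0)$, and on $\gO_0$ the loops pass near the origin $N-1$ times, so a single transversal section does not give a well-controlled $\gth$ at all.

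The paper's proof avoids every one of these issues by never integrating over a full loop. It works in the $C^1$ flow-box coordinates $Z_i(t,s)=X(t,Y_i(s))$ on a finite window $|t|\leq T+2$ around each separatrix branch (these are regular across $s=0$, which is where the $C^1$ gluing actually happens), replaces $g=G(0,0)-G(\cd,0)$ by a $C^1$ truncation $\hat g$ vanishing on a small ball $B_r$ where $|g|<\ge/2$ already holds, and subtracts not the loop average $\ol G_i$ but the small finite-window mean $m_i(s)=\fr{1}{2T}\int_\R\til g_i(t,s)\,dt$, mollified in $t$ so that the primitive $\til\psi_i$ is compactly supported in $t$ and glues with $0$ near the origin and outside the flow boxes. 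The resulting identity $-b\cd D\psi=\til g_i-m_i^\gr$ then gives the inequality with every error bounded by $\ge/2$ pointwise. To salvage your construction you would need a quantitative modulus of continuity for $G$ at the origin compatible with the degeneracy of $|DH|$, which the hypotheses do not provide; the truncation in time is the idea your argument is missing.
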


\begin{proof}
Fix any $\ge > 0$.
Set the function $g \in C(\ol \gO)$ by $g(x) = G(0, 0) - G(x, 0)$.
Note that, for any neighborhood $V$ of $c_0(0)$,
there is $\gd \in (0, \bar h)$ such that $\gO(\gd) \sb V$.

We fix $\bar x_i \in c_i(0) \sm \{ 0 \}$ for each $i \in \cI_1$.
Choose $r > 0$ so that $B_r \sb \ol \gO$ and
\begin{equation*}
|g(x)|<\fr{\ge}{2} \ \ \ \text{ for all } x \in B_r.
\end{equation*}
Choose $\hat g \in C^1(\gO)$ so that
\begin{equation*}
\hat g(x) = 0  \ \ \ \text{ for all } x\in B_r \ \ \ \text{ and } \ \ \ |g(x) - \hat g(x)| < \fr{\ge}{2} \ \ \ \text{ for all } x \in \gO.
\end{equation*}
Also, we choose $T > 0$ so that 
\begin{equation*}
X(t, \bar x_i) \in B_r \ \ \ \text{ for all } |t| \geq T \text{ and } i \in \cI_1,
\end{equation*}
and 
\begin{equation*}
\fr{1}{2T} \left| \int_{-T}^T \hat g(X(t ,\bar x_i)) \, dt \right| < \fr{\ge}{2} \ \ \ \text{ for all } i \in \cI_1.
\end{equation*}

For $i \in \cI_1$, let $Y_i(h)$ be the solution of the problem
\begin{equation*}
Y'(h) = \fr{DH(Y(h))}{|DH(Y(h))|^2} \ \ \ \text{ and } \ \ \ Y(0) = \bar x_i,
\end{equation*}
and note that
\begin{equation*}
Y_i \in C^1((-\bar h, \bar h); \R^2) \ \ \ \text{ and } \ \ \ H(Y_i(h)) = h \ \ \ \text{ for all } h \in (-\bar h, \bar h).
\end{equation*}

We write $Z_i(t, s)=X(t, Y_i(s))$ for $(t, s) \in \R \tim (- \bar h, \bar h)$ and $i \in \cI_1$.
For $i \in \cI_1$ and $\gd \in (-\bar h, \bar h)$, we set
\begin{equation*}
\gO_{i, T} (\gd) = \{ Z_i(t, s) \mid |t| \leq T+2, \, |s| \leq \gd \}. 
\end{equation*}
Note that the sets 
\begin{equation*}
\{ X(t, \bar x_i) \mid |t| \leq T+2 \},
\end{equation*}
with $i \in \cI_1$, are mutually disjoint,
and we choose $\gg \in (0, \bar h)$ so that 
$\gO_{i,T}(\gg)$, with $i \in \cI_1$, are mutually disjoint.

Moreover, 
we may assume that 
\begin{equation*}
Z_i(t, s) \in B_r \ \ \ \text{ for all } (t, s, i) \in ([-T-2, T+2] \sm (-T, T)) \tim [-\gg, \gg] \tim \cI_1, 
\end{equation*}
\begin{equation*}
\fr{1}{2T} \left| \int_{-T}^T \hat g(Z_i(t,s)) \, dt \right| < \fr{\ge}{2} \ \ \ \text{ for all } (s, i) \in [-\gg, \gg] \tim \cI_1,
\end{equation*}
and 
\begin{equation*}
T_i(s) > 2T+4 \ \ \ \text{ for all } (s, i) \in[-\gg, \gg] \tim \cI_0. 
\end{equation*}

For $i \in \cI_1$, set 
\begin{equation*}
\til g_i(t, s) =
\begin{cases}
\hat g(Z_i(t, s)) \ \ \ &\text{ for } (t, s) \in [-T, T] \tim [-\gg, \gg], \\
0 \ \ \ &\text{ for } (t, s) \in (\R \sm [-T, T]) \tim [-\gg, \gg],
\end{cases}
\end{equation*}
and note that $\til g_i\in C^1(\R \tim [-\gg, \gg])$.
Also, for $i \in \cI_1$, set
\begin{equation*}
m_i (s) = \fr{1}{2T} \int_{\R} \til g_i(t, s) \, dt \ \ \ \text{ for } s \in [-\gg, \gg],
\end{equation*}
and 
\begin{equation*}
\bar m_i(t, s) =
\begin{cases}
m_i (s) \ \ \ \ &\text{ for } (t, s) \in [-T, T] \tim [-\gg, \gg] \\
0 \ \ \ &\text{ for } (t,s) \in (\R \sm [-T, T]) \tim [-\gg, \gg]. 
\end{cases}
\end{equation*}
Note that 
\begin{equation*}
|\bar m_i(t, s)| < \fr{\ge}{2} \ \ \ \text{ for all } (t, s, i) \in \R \tim [-\gg, \gg] \tim \cI_1. 
\end{equation*}

Let $\gr \in C^1(\R)$ be a standard mollification kernel, with $\mathrm{ supp } \, \rho \sb (-1, 1)$.
For $i \in \cI_1$, set 
\begin{equation*}
m_i^\gr (t, s) = (\gr * \bar m_i (\cd, s))(t) \ \ \ \text{ for } (t, s) \in \R \tim [-\gg, \gg]
\end{equation*}
and 
\begin{equation*}
\til \psi_i (t, s) = -\int_{-\inft}^t(\til g_i(r, s) - m_i^\gr(r, s)) \, dr  \ \ \ \text{ for } (t, s) \in \R \tim [-\gg, \gg].  
\end{equation*}
Note that 
\begin{equation*}
m_i^\gr(t, s) = 0 \ \ \ \text{ for all } (t, s, i) \in (\R \sm (-T-1, T+1)) \tim [-\gg, \gg] \tim \cI_1,
\end{equation*}
\begin{equation*}
|m_i^\gr (t, s)| < \fr{\ge}{2} \ \ \ \text{ for all } (t, s, i) \in \R \tim [-\gg, \gg] \tim \cI_1,
\end{equation*}
and
\begin{align*}
\int_\R m_i^\gr(t, s) \, dt = \int_\R \til g_i(t, s) \, dt \ \ \ \text{ for all } (s, i) \in [-\gg, \gg] \tim \cI_1. 
\end{align*}
Note also that $\til \psi_i \in C^1(\R \tim [-\gg, \gg])$, 
\begin{equation*}
\til \psi_i (t, s) = 0 \ \ \ \text{ for all } (t, s, i) \in (\R \sm (-T-1, T+1)) \tim [-\gg, \gg] \tim \cI_1,
\end{equation*}
and 
\begin{equation*}
-\til \psi_{i,t }(t, s) = \til g_i(t, s) - m_i^\gr(t, s) < \til g_i (t, s) + \fr{\ge}{2} \ \ \ \text{ for all } (t, s, i) \in \R \tim [-\gg, \gg] \tim \cI_1.  
\end{equation*}

We show that $Z_i : [-T-2, T+2] \tim [-\gg, \gg] \to \gO_{i,T}(\gg)$ is a $C^1$ diffeomorphism. 
Obviously, by the definition of $\gO_{i,T}(\gg)$, $Z_i$ is a $C^1$ mapping and surjective.
To see that $Z_i$ is injective,
let $(t, s), (\gt, \gs) \in [-T-2, T+2] \tim [-\gg, \gg]$ be such that $Z_i(t, s) = Z_i(\gt, \gs)$. 
Note that 
\begin{equation*}
s = H(X(0,Y_i(s))) = H(Z_i (t, s)) = H(Z_i (\gt, \gs)) = \gs.
\end{equation*}
If $s=0$, we see by the standard ode theory that $t = \gt$.
If $s \not= 0$, then $t= \gt$ or $|t-\gt| \geq T_i(s)$.
The latter case is impossible since $T_i (s) > 2T+4$ and $|t - \gt| \leq 2T+4$.
Thus, we have $t = \gt$ and conclude that $Z_i$ is bijective.  
We write $(Z_i^1,Z_i^2)$ for $Z_i$ and note that 
\begin{equation} \label{1}
Z_{i,t}(t, s) = b(Z_i(t, s)) = (H_{x_2}(Z_i(t, s)) ,-H_{x_1}(Z_i(t, s))).
\end{equation}
Differentiating $H(Z_i(t, s))=s$ yields
\begin{equation*}
H_{x_1}(Z_i(t, s))Z_{i,s}^1 + H_{x_2} (Z_i(t, s))Z_{i,s}^2 = 1.
\end{equation*}
This combined with \eqref{1} reveals
\begin{equation*}
1 = H_{x_1}(Z_i(t, s))Z_{i,s}^1 + H_{x_2}(Z_i(t, s))Z_{i,s}^2 = -Z_{i,t}^2Z_{i,s}^1 + Z_{i,t}^1Z_{i,s}^2 = \det \big(Z_{i,t},  Z_{i,s}\big). 
\end{equation*}
The Inverse Function Theorem guarantees that
$Z_i : [-T-2, T+2] \tim [-\gg, \gg] \to \gO_{i,T}(\gg)$ is a $C^1$ diffeomorphism.

Note that the Inverse Function Theorem implies that
$\gO_{i,T} (\gg)$ is a neighborhood of $\{X(t, \bar x_i) \mid |t| \leq T+1\}$. Since 
\begin{equation*}
X(t, \bar x_i) \in B_r \ \ \ \text{ for all } |t| \geq T \text{ and } i \in \cI_1,
\end{equation*}
it follows that the set $B_r \cup \gO_{i,T}(\gg)$ is a neighborhood of $c_i(0)$ and, hence,
\begin{equation*}
B_r \cup \bigcup_{i \in \cI_1}\gO_{i,T} (\gg)
\end{equation*}
is a neighborhood of $c_0(0)$.
Thus, we may choose $\gd \in (0, \gg)$ so that 
\begin{equation*}
\gO(\gd) \sb B_r \cup \bigcup_{i \in \cI_1} \gO_{i,T}(\gg). 
\end{equation*}

Set  
\begin{equation*}
\psi_i (x) = \til \psi_i (Z_i^{-1}(x)) \ \ \ \text{ for } (x, i) \in \gO_{i,T}(\gg) \tim \cI_1. 
\end{equation*}
It is clear that $\psi_i \in C^1(\gO_{i,T}(\gg))$.  
We define $\psi : \gO(\gd) \to \R$ by
\begin{equation*}
\psi(x) =
\begin{cases}
\psi_i(x) \ \ \ &\text{ if } (x, i) \in \gO_{i,T}(\gg) \tim \cI_1, \\
0 \ \ \ &\text{ otherwise. }  
\end{cases}
\end{equation*}
Since $\gO_{i,T}(\gg)$, with $i \in \cI_1$, are mutually disjoint,
the function $\psi$ is well-defined.

Let $x \in \gO(\gd)$. If 
\begin{equation*}
x\in B_r \sm \bigcup_{i \in \cI_1}\gO_{i,T}(\gg), 
\end{equation*}
then $\psi=0$ in a neighborhood $V_x$
(e.g. $V_x = B_r \sm \bigcup_{i \in \cI_1}\gO_{i,T}(\gg)$) of $x$, $\psi \in C^1(V_x)$, and 
\begin{equation} \label{2}
-b(x) \cd D\psi(x) = 0 <g(x) + \ge. 
\end{equation}
Otherwise, we have 
\begin{equation*}
x \in \bigcup_{i \in \cI_1} \gO_{i,T}(\gg). 
\end{equation*}
Choose $i \in \cI_1$ so that $x \in \gO_{i,T}(\gg)$ and
set $(t, s)=Z_i^{-1}(x) \in [-T-2, T+2] \tim [-\gg, \gg]$.
Since $\gd < \gg$, we see that $|s|<\gg$. 
If $t = \pm (T+2)$, then there exists a neighborhood $V_x$ of $x$ such that
\begin{equation*}
\psi_i (x) = 0 \ \ \ \text{ for all } x \in V_x \cap \gO_{i, T}(\gg) 
\end{equation*}
and
\begin{equation*}
V_x \cap \gO_{j, T}(\gg) = \emp \ \ \ \text{ for all } j \not= i.
\end{equation*}
Since $\psi = 0$ in $V_x \sm \gO_{i, T}(\gg)$, we see that $\psi = 0$ in $V_x$, and, hence, we get \eqref{2}.
If $|t|<T+2$, then $\psi$ is of class $C^1$ in a neighborhood $V_x$ (e.g. $V_x = \gO_{i, T} (\gg)$) of $x$ and 
\begin{equation*}
\psi (y) = \til \psi_i (Z_i^{-1}(y)) \ \ \ \text{ for all } y \in V_x.
\end{equation*}
Writing $Z_i^{-1}(y) = (\gt, \gs)$ and differentiating the above, we get  
\begin{equation*}
\til \psi_{i,t} (\gt, \gs) = D\psi(y) \cd b(y)
\end{equation*}
and 
\begin{equation*}
-b(x) \cd D\psi(x) = -\psi_{i,t}(Z_i^{-1}(x)) = \til g_i(Z_i^{-1}(x)) + \fr{\ge}{2} = \hat g(x) + \fr{\ge}{2} <g(x) + \ge. 
\end{equation*}
This concludes the proof. 
\end{proof}

\begin{proof}[Proof of Lemma \ref{v^--d^ast}]
We argue by contradiction.
Thus, set $\bar d = \min_{c_0(0)} v^-$ and suppose that $\bar d < d$.
Using Lemmas \ref{nu_i^d} and \ref{key} and arguing
as in the proof of \cite[Lemma 3.8]{K}, we obtain a contradiction.
\end{proof}

Here, we note that the initial value problems
\begin{equation*}
\dot X_\pm^\ep (t) = \fr{b(X_\pm^\ep (t))}{\ep} \pm \nu \, \fr{DH(X_\pm^\ep (t))}{|DH(X_\pm^\ep (t))|}
\ \ \ \text{ and } \ \ \ X_\pm^\ep (0) = x \in \gO \sm \{ 0 \},
\end{equation*}
where $\nu > 0$ is the constant from \eqref{coercivity}, admit unique solutions $X_\pm^\ep (t, x)$
in the maximal interval $(\ul{\gs_\pm^\ep} (x), \ol{\gs_\pm^\ep} (x))$
where $\ul{\gs_\pm^\ep} (x) < 0 < \ol{\gs_\pm^\ep} (x)$, and
the maximality means that either
\begin{equation*}
\ul{\gs_\pm^\ep} (x) = -\inft \ \ \ \text{ or } \ \ \ \lim_{t \to \ul{\gs_\pm^\ep} (x) + 0} \mathrm{dist} (X_\pm^\ep (t, x), \pl \gO \cup \{ 0 \}) = 0,
\end{equation*}
and either
\begin{equation*}
\ol{\gs_\pm^\ep} (x) = \inft \ \ \ \text{ or } \ \ \ \lim_{t \to \ol{\gs_\pm^\ep} (x) - 0} \mathrm{dist} (X_\pm^\ep (t, x), \pl \gO \cup \{ 0 \}) = 0.
\end{equation*}

\begin{lem} \label{tau_2-tau_1} 
Let $\ep \in (0, \ep_0)$, $h \in (0, \bar h)$, and $x \in \gO (h) \sm \{ 0 \}$.
If $\gt_1, \gt_2 \in (\ul{\gs_+^\ep} (x), \ol{\gs_+^\ep} (x))$ are such that
$\gt_1 < \gt_2$ and $X_+^\ep (t, x) \in \gO (h)$ for all $t \in (\gt_1, \gt_2)$, then
\begin{equation*} \label{ineq-tau_2-tau_1}
\gt_2 - \gt_1 \leq \cfr{2(m+2)}{\nu c_0(m-n+2)} \, h^\fr{m-n+2}{m+2}.
\end{equation*}
Also inequality \eqref{ineq-tau_2-tau_1} holds with $\ul{\gs_+^\ep}$, $\ol{\gs_+^\ep}$, and $X_+^\ep$
being replaced by $\ul{\gs_-^\ep}$, $\ol{\gs_-^\ep}$, and $X_-^\ep$.
\end{lem}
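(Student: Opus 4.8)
The plan is to track the value of the Hamiltonian $H$ along the perturbed trajectory and to exploit the fact that the large drift $b/\ep$ contributes nothing to its evolution. First I would differentiate $t \mapsto H(X_+^\ep(t, x))$. Since $b = (H_{x_2}, -H_{x_1})$ satisfies $DH \cd b = 0$ pointwise, the term $b/\ep$ drops out entirely and one obtains
\begin{equation*}
\fr{d}{dt} H(X_+^\ep(t, x)) = DH \cd \Big( \fr{b}{\ep} + \nu \fr{DH}{|DH|} \Big) = \nu |DH(X_+^\ep(t, x))|.
\end{equation*}
In particular the $\ep$-dependence disappears, which explains why the bound is uniform in $\ep$. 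Moreover, the only critical point of $H$ inside $\gO(h)$ is the origin, because the local minima $z_i$ satisfy $|H(z_i)| = |h_i| \geq \bar h > h$ and hence lie outside $\gO(h)$; and on the open maximal interval $(\ul{\gs_+^\ep}(x), \ol{\gs_+^\ep}(x))$ the trajectory stays away from the origin by the maximality condition. Thus $|DH|$ is strictly positive along the trajectory, so $\phi(t) := H(X_+^\ep(t, x))$ is a strictly increasing $C^1$ function on $(\gt_1, \gt_2)$.

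Next I would use this monotonicity to convert the elapsed time into a one-dimensional integral over the values of $H$. Writing $\phi'(t) = \nu |DH(X_+^\ep(t, x))|$ and changing variables $s = \phi(t)$ gives
\begin{equation*}
\gt_2 - \gt_1 = \int_{\gt_1}^{\gt_2} \fr{\phi'(t)}{\nu |DH(X_+^\ep(t, x))|} \, dt = \fr{1}{\nu} \int_{\phi(\gt_1)}^{\phi(\gt_2)} \fr{ds}{|DH(X_+^\ep(\phi^{-1}(s), x))|}.
\end{equation*}
Because $X_+^\ep(t, x) \in \gO(h)$ for $t \in (\gt_1, \gt_2)$, the interval $[\phi(\gt_1), \phi(\gt_2)]$ is contained in $[-h, h]$, so after applying the lower bound \eqref{H-DH} the elapsed time is dominated by $\tfrac{1}{\nu c_0} \int_{-h}^{h} |s|^{-n/(m+2)} \, ds$. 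Here the condition $n < m + 2$ from (H4) is exactly what guarantees integrability of $|s|^{-n/(m+2)}$ near $s = 0$, and a direct computation yields
\begin{equation*}
\gt_2 - \gt_1 \leq \fr{1}{\nu c_0} \int_{-h}^{h} |s|^{-\fr{n}{m+2}} \, ds = \fr{2(m+2)}{\nu c_0 (m-n+2)} \, h^{\fr{m-n+2}{m+2}},
\end{equation*}
which is the asserted estimate. The bound for $X_-^\ep$ follows identically: now the sign in the drift correction is $-\nu$, so $\phi$ is strictly \emph{decreasing}, and the same integral reappears once the limits of integration are reversed.

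The main obstacle is that the key inequality \eqref{H-DH} is only assumed on the neighborhood $V$ of the origin, whereas $\gO(h)$ is a thin neighborhood of the entire zero-level curve $c_0(0)$ and in general is not contained in $V$. To close this gap I would split $\gO(h)$ using a ball $B_\gk$ with $\ol B_\gk \sb \gO \cap V$: on $B_\gk \cap \gO(h)$ the bound \eqref{H-DH} holds by hypothesis, while the complement $\ol{\gO(h)} \sm B_\gk$ is compact and, since $B_\gk$ already excludes the origin and the remaining critical points $z_i$ satisfy $|H(z_i)| \geq \bar h > h$, it contains no critical point of $H$; hence $|DH| \geq \mu$ there for some $\mu > 0$. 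For $h$ small enough that $c_0 h^{\fr{n}{m+2}} \leq \mu$, one then has $c_0 |H|^{\fr{n}{m+2}} \leq c_0 h^{\fr{n}{m+2}} \leq \mu \leq |DH|$ on $\ol{\gO(h)} \sm B_\gk$ as well, so \eqref{H-DH} remains valid throughout $\gO(h)$ and the change-of-variables estimate above goes through verbatim; this small-$h$ regime is precisely the one in which the lemma is applied.
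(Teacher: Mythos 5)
Your proof is correct and is essentially the paper's own argument: the paper proves this by invoking \cite[Lemma 5.1]{K} with $\psi (r)= r|r|^{-\frac{n}{m+2}}$, i.e.\ it integrates $\frac{d}{dt}\psi(H(X_\pm^\ep(t))) = \pm \nu\,\psi'(H)|DH| \geq \nu c_0 \frac{m-n+2}{m+2}$ using $DH\cd b=0$ and \eqref{H-DH}, which is precisely your change-of-variables computation, since $\psi$ is (up to the factor $\frac{m-n+2}{m+2}$) the antiderivative of $|s|^{-n/(m+2)}$. The one point worth flagging is your restriction to small $h$ when extending \eqref{H-DH} from $V$ to all of $\gO(h)$: this is not needed, because $\ol{\gO(\bar h)} \sm B_\gk$ is compact and contains no critical point of $H$, so one may once and for all replace $c_0$ by $\min\{c_0,\ \mu\,\bar h^{-n/(m+2)}\}$ and obtain \eqref{H-DH} on the whole of $\gO(\bar h)\sm\{0\}$; with that convention your argument gives the stated constant for every $h\in(0,\bar h)$, not only for $h$ small.
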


Here, $c_0 > 0$ is the constant from \eqref{H-DH}.

\begin{proof}
We can prove this lemma by using \eqref{H-DH} and
replacing the function $\psi$ in \cite[Lemma 5.1]{K}
by $\psi (r)= r|r|^{-\frac{n}{m+2}}$ for $r \in \R \sm \{ 0 \}$.
\end{proof}

\begin{proof}[Proof of Lemma \ref{v^+-d^ast}]
We obtain \eqref{v^-} by using \eqref{solution} and Lemma \ref{tau_2-tau_1}
as well as the dynamic programming principle as in the proof of \cite[Lemma 3.7]{K}.
\end{proof}

\section{The boundary data for the limiting problem}

In this section, we present a sufficient condition on the data $d_i$ for which (G5) and (G6) hold.

Here, we only state the theorems concerning the sufficient condition and
refer to \cite[Section 7]{K} for the proofs.

For  $i \in \cI_0$,
we write $I_i$ for the set of $d \in \R$ such that the set
\begin{equation*}
\{ u \in \cS_i^- \cap C(\bar J_i) \mid u(h_i) = d \}
\end{equation*}
is nonempty.

We note that $I_i = (-\inft, a_i]$ for some $a_i \in \R$.
Indeed, in view of the remark after Lemma \ref{nu_i^d},
if $d \in I_i$ and $c < d$, then $c \in I_i$.
Also, if $d \in \R$ satisfies
\begin{equation*}
\gl d + \max_{h \in \bar J_i} \ol G_i (h, 0) \leq 0, 
\end{equation*}
then $d \in \cS_i^-$ and $d \in I_i$,
while if $d \in \R$ satisfies
\begin{equation*}
\gl d + \min_{(h, p) \in \bar J_i \tim \R} \ol G_i (h, p) > 0,
\end{equation*} 
then $d \not\in I_i$.
Thus, we see that $I_i = (-\inft, a_i]$.

For $i \in \cI_0$, $d \in I_i$, and $h \in \bar J_i$, we define
\begin{equation*}
\gr_i^d (h) = \sup \{ u(h) \mid u \in \cS_i^- \cap C(\bar J_i), \ u(h_i) = d \},
\end{equation*}
and, we have $\gr_i^d \in \cS_i\cap C(\bar J_i)$ and $\gr_i^d (h_i) = d$. 
By Lemma \ref{bound-S_i^-}, we see that
\begin{equation*}
\gr_0 := \min_{i \in \cI_0} \sup_{d \in I_i} \gr_i^d (0) < \inft.
\end{equation*}

Also, we write $I$ for the set of $d \in \R$ such that
\begin{equation*}
\{ u \in \cS_i^- \cap C(\bar J_i) \mid u(0) = d \} \not= \emptyset \ \ \ \text{ for all } i \in \cI_0.
\end{equation*}
and, for $i \in \cI_0$, $d \in I$, and $h \in \bar J_i$, we define
\begin{equation*}
\nu_i^d (h) = \sup \{ u(h) \mid u \in \cS_i^- \cap C(\bar J_i), \ u(0) = d \}.
\end{equation*}
Similarly to the above, we see that $I = (-\infty, \gr_0]$ and
that $\nu_i^d \in \cS_i \cap C(\bar J_i)$ and $\nu_i^d (0) = d$.

\begin{thm} \label{exist}
Let $(d, d_0, \lds, d_{N-1}) \in \R^{N+1}$. 
The problem
\begin{align}
\begin{cases}
\gl u_i + \ol G_i (h, u_i') = 0 \ \ \ \text{ in } J_i, \\
u_i (h_i) = d_i, \\
u_i (0) = d
\end{cases}
\end{align}
has a viscosity solution $(u_0, \lds, u_{N-1})$ $\in C(\bar J_0) \tim \lds \tim C(\bar J_{N-1})$
if and only if
\begin{align} \label{admissible}
\begin{cases}
(d, d_0, \lds, d_{N-1}) \in I \tim I_1 \tim \lds \tim I_{N-1}, \\
\min_{i \in \cI_0} \gr_i^{d_i} (0) \geq d, \\
\nu_i^{d} (h_i) \geq d_i \ \ \ \text{ for all } i \in \cI_0.
\end{cases}
\end{align} 
\end{thm}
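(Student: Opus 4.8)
The plan is to prove both implications and to exploit that the system decouples: the only coupling between the components is the common prescribed value $d$ at the node $0$, so it suffices to solve, for each $i \in \cI_0$ separately, the two-point Dirichlet problem on $\bar J_i$ with $u_i(h_i)=d_i$ and $u_i(0)=d$, after which $u_1(0)=\cds=u_{N-1}(0)=d$ holds automatically. Throughout I would work with the two maximal subsolutions already constructed before the statement, namely $\gr_i^{d_i}$ (the largest subsolution taking value $d_i$ at $h_i$) and $\nu_i^d$ (the largest subsolution taking value $d$ at $0$), both of which lie in $\cS_i \cap C(\bar J_i)$.

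For necessity, suppose $(u_0,\lds,u_{N-1})$ solves the problem. Each $u_i$ is in particular a subsolution in $\cS_i^- \cap C(\bar J_i)$ with $u_i(h_i)=d_i$ and $u_i(0)=d$, which at once gives $d_i \in I_i$ and $d \in I$, that is, the first line of \eqref{admissible}. Moreover, by the very definition of $\gr_i^{d_i}$ as a supremum over subsolutions attaining $d_i$ at $h_i$, we have $u_i \le \gr_i^{d_i}$ on $\bar J_i$, whence $\gr_i^{d_i}(0) \ge u_i(0)=d$ for every $i$, the second line; symmetrically, $u_i \le \nu_i^d$ yields $\nu_i^d(h_i) \ge u_i(h_i)=d_i$, the third line. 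Only maximality is used here, no comparison principle.

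For sufficiency I would exhibit a solution explicitly by setting
\[
u_i = \min\{\gr_i^{d_i},\, \nu_i^d\} \in C(\bar J_i),
\]
which is well defined thanks to the first line of \eqref{admissible}. Its boundary values are forced by the remaining conditions: at $h_i$ one has $\gr_i^{d_i}(h_i)=d_i$ while $\nu_i^d(h_i) \ge d_i$ (third line), so $u_i(h_i)=d_i$; at $0$ one has $\nu_i^d(0)=d$ while $\gr_i^{d_i}(0) \ge d$ (second line), so $u_i(0)=d$. That $u_i$ is a supersolution in $J_i$ is automatic, since the minimum of two supersolutions is always a supersolution. The crux is to verify that $u_i$ is also a subsolution. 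This is where convexity enters: by (G3) the map $q \mapsto G(x, qDH(x))$ is convex, hence so is its average $q \mapsto \ol G_i(h, q)$, and for a Hamiltonian convex in the gradient the minimum of two subsolutions is again a subsolution. Concretely, at an interior point $h_0$ the superdifferential $D^+u_i(h_0)$ is contained in the convex hull of $D^+\gr_i^{d_i}(h_0) \cup D^+\nu_i^d(h_0)$, so for each $p \in D^+u_i(h_0)$ the subsolution inequalities for $\gr_i^{d_i}$ and $\nu_i^d$ at the extreme slopes, together with the convexity of $\ol G_i(h_0, \cd)$, yield $\gl u_i(h_0) + \ol G_i(h_0, p) \le 0$.

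I expect the subsolution property of the minimum to be the only genuine obstacle, and it is precisely there that assumption (G3) is indispensable; the remaining points (boundary values, the supersolution property, and continuity up to $\bar J_i$ via Lemma \ref{extend}) are routine. This parallels the construction carried out in \cite[Section 7]{K}, to which the corresponding statement there is reduced.
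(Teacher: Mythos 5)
Your overall strategy is the natural one, and it is essentially what the paper relies on: the paper gives no proof of Theorem \ref{exist} at all, deferring to \cite[Section 7]{K}, where the construction is exactly of the type you propose --- necessity from the maximality of $\gr_i^{d_i}$ and $\nu_i^d$, sufficiency by checking that $u_i=\min\{\gr_i^{d_i},\nu_i^d\}$ has the prescribed boundary values and solves \eqref{limHJ} on each edge separately (the system does decouple, apart from the common value $d$ at the node). The necessity half and the boundary-value bookkeeping in the sufficiency half are correct as written.

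The one step whose justification would fail is precisely the one you single out as the crux: the subsolution property of the minimum. The inclusion $D^+\min\{u,v\}(h_0)\sb \mathrm{co}\,\bigl(D^+u(h_0)\cup D^+v(h_0)\bigr)$ is false in general, even for Lipschitz functions of one variable: take $u(h)=\max\{h,0\}$ and $v(h)=\max\{-h,0\}$, so that $\min\{u,v\}\equiv 0$ and $D^+\min\{u,v\}(0)=\{0\}$, while $D^+u(0)=D^+v(0)=\emp$; both $u$ and $v$ are viscosity subsolutions of the convex equation $|u'|-1\le 0$, so this situation genuinely occurs for subsolutions of convex Hamiltonians (it would be excluded only under semiconcavity, which is not available here since $\ol G_i$ is merely continuous). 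The fact you need --- that the minimum of two subsolutions of \eqref{limHJ} is again a subsolution --- is nevertheless true, but it must be proved differently: by \eqref{unif-cont} every element of $\cS_i^-$ is locally Lipschitz in $J_i$, and since $\ol G_i(h,\cd)$ is convex (being an average of the convex functions $q\mapsto G(x,qDH(x))$), a locally Lipschitz function is a viscosity subsolution of \eqref{limHJ} if and only if it satisfies the inequality almost everywhere; the nontrivial implication is obtained by mollifying and invoking Jensen's inequality together with stability. Since $(\min\{u,v\})'$ coincides a.e.\ with $u'$ on $\{u<v\}$, with $v'$ on $\{v<u\}$, and with $u'=v'$ a.e.\ on $\{u=v\}$, the a.e.\ inequality passes to the minimum. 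With the superdifferential argument replaced by this one, your proof is complete.
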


Here, we set
\begin{equation*}
\cD = \{ (d, d_0, \lds, d_{N-1}) \in \R^{N+1} \mid \text{ \eqref{admissible} is satisfied } \},
\end{equation*}
and 
\begin{align*}
\cD_0 = \{ &(d, d_0, \lds, d_{N-1}) \in \R^{N+1} \mid \\
          &\text{ there exists $a > 0$ such that } (d + a, d_0 + a, \lds, d_{N-1} + a) \in \cD \}. 
\end{align*}

The following theorem gives a sufficient condition for which (G5) and (G6) hold.

\begin{thm}
For any $(d, d_0, \lds, d_{N-1}) \in \cD_0$,
\emph{(G5)} and \emph{(G6)} hold for some boundary data $g^\ep$. 
\end{thm}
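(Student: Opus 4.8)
The plan is to produce $g^\ep$ as the boundary trace of a global viscosity subsolution of the first equation of \eqref{epHJ} built from the solution of the limiting ODE system, and then to read off (G5) from the sub-optimality principle and (G6) from a local barrier argument at the outer boundary. The first observation is that (G5) is essentially the statement that $g^\ep$ extends to a subsolution: if $w^\ep \in C(\ol\gO)$ satisfies $\gl w^\ep - b \cd Dw^\ep/\ep + G(x, Dw^\ep) \leq 0$ in $\gO$ in the viscosity sense and we set $g^\ep := w^\ep|_{\pl\gO}$, then integrating $\fr{d}{dt}\big(w^\ep(X^\ep(t,x,\ga))e^{-\gl t}\big)$ along a trajectory of \eqref{state} constrained to $\ol\gO$ and using that $L$ is the Legendre transform of $-G$ (so $G(x,Dw^\ep) + \ga\cd Dw^\ep \geq -L(x,\ga)$) yields
\begin{equation*}
w^\ep(x) \leq \int_0^\vth L(X^\ep(t,x,\ga), \ga(t))e^{-\gl t}\,dt + w^\ep(X^\ep(\vth,x,\ga))e^{-\gl\vth}.
\end{equation*}
Restricting $x$ and $y = X^\ep(\vth,x,\ga)$ to $\pl\gO$ is exactly the inequality in (G5), and uniform bounds on $w^\ep$ give the uniform boundedness of $\{g^\ep\}$.

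For Step 2 I would construct $w^\ep$ from the admissible data. Given $(d, d_0, \lds, d_{N-1}) \in \cD_0$, fix $a > 0$ with the shifted tuple in $\cD$. Since the conditions \eqref{admissible} are invariant under a simultaneous constant shift and $I, I_i$ are downward closed, the unshifted tuple also lies in $\cD$, so Theorem \ref{exist} yields a solution $(u_i)_{i \in \cI_0}$ of the ODE system with $u_i(h_i) = d_i$ and $u_i(0) = d$; moreover the slack forces $d < \gr_0$ and $d_i < a_i$ for every $i$, i.e. $d$ and the $d_i$ sit strictly below the upper endpoints of $I$ and of the $I_i$, which is the room needed to perturb $(u_i)$ into strict subsolutions with trace still converging to $d_i$. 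Put $\bar u = u_i \circ H$ on $\ol\gO_i$, which is continuous because all $u_i$ equal $d$ at $h=0$. As $b\cd DH = 0$, the fast term $-b\cd D\bar u/\ep$ vanishes, so $\bar u$ fails to solve \eqref{epHJ} only through the orbit-oscillation of $G(\cd, u_i'DH)$ about its average $\ol G_i(H, u_i')$; I therefore correct it using the flow-box coordinates $Z_i(t,s)=X(t,Y_i(s))$ and the mollification device of Lemma \ref{key} (now applied to $G(\cd,u_i'DH)-\ol G_i(H,u_i')$ in place of $G(\cd,0)-G(0,0)$), producing a corrector $\phi_i^\ep$, and set
\begin{equation*}
w^\ep = (u_i - \gd(\ep))\circ H + \ep\,\phi_i^\ep \ \ \ \text{ on } \ol\gO_i, \ \ \ \gd(\ep)\downarrow 0.
\end{equation*}
The constant shift makes $u_i - \gd(\ep)$ a strict subsolution of \eqref{limHJ} with margin $\gl\gd(\ep)$ (as $\ol G_i$ is unaffected by additive constants), and this margin absorbs the corrector error, so $w^\ep$ is a genuine subsolution with $w^\ep|_{\pl_i\gO} = d_i - \gd(\ep) + O(\ep) \to d_i$. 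The feature that distinguishes this from \cite{K} is that no explicit formula for $X(t,x)$ is available near the degenerate origin; instead I would use $T_i \in L^1(J_i)$ from \eqref{T_i-L^1} (Lemma \ref{order-T_i}) together with the length bound $L(c_i(h)\cap B_r)\leq K r^{m-n+2}$ of Lemma \ref{length} to prove $\ep\phi_i^\ep \to 0$ uniformly and that the corrector survives up to the node $c_0(0)$.

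For Step 3 I would verify (G6). Since $g^\ep = w^\ep|_{\pl\gO}\to d_i$ uniformly on $\pl_i\gO$, evaluating the half-relaxed limits at boundary points already gives $v^+ \geq d_i \geq v^-$ on $\pl_i\gO$, so only the reverse inequalities $v^+\leq d_i$ and $v^-\geq d_i$ remain, i.e. the exclusion of a boundary layer. Because each $\pl_i\gO = \{H = h_i\}$ is disjoint from the degenerate origin, the flow \eqref{HS} there is a regular periodic orbit, and I would build one-sided local sub- and supersolutions of \eqref{epHJ} in a collar of $\pl_i\gO$ with trace $g^\ep$ and oscillation $o(1)$, exactly as in the non-degenerate theory, the strictness $d_i < a_i$ furnishing the needed room. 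Comparing these barriers with $u^\ep$ in the collar pins the two remaining inequalities and yields (G6).

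The hard part will be Step 2: producing a single \emph{global} continuous subsolution across the node $c_0(0)$. The degeneracy forces $T_i(h)\to\inft$ and makes the orbits long and geometrically delicate precisely where the $N$ branches meet, so it is there that the corrector degrades. The crux, and the main departure from \cite{K}, is to replace the explicit-flow arguments by the geometric integral estimates of Section 3 and to exploit the strict admissibility supplied by $\cD_0$ so as to glue the branches into one subsolution whose boundary trace converges to the prescribed $d_i$.
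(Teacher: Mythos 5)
First, a caveat: the paper does not actually prove this theorem --- Section 5 explicitly states the results and refers to \cite{K}*{Section 7} for the proofs --- so there is no in-paper argument to compare yours against step by step. That said, your overall strategy is the natural one and is consistent with the cited construction: realize $g^\ep$ as the boundary trace of a global continuous viscosity subsolution $w^\ep$ of \eqref{epHJ}, deduce (G5) from the suboptimality principle for convex Hamiltonians (your computation with $L$ as the Legendre transform is correct, modulo the standard justification for merely Lipschitz viscosity subsolutions), and obtain (G6) from $w^\ep\le u^\ep$ below and a fast-exit control estimate above. Your reduction of $\cD_0$ to $\cD$ is also essentially right, though the conditions \eqref{admissible} are not literally shift-invariant; what one has is the one-sided monotonicity $\gr_i^{d_i+a}\le\gr_i^{d_i}+a$ and $\nu_i^{d+a}\le\nu_i^{d}+a$ (subsolutions minus positive constants remain subsolutions), which suffices.

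The genuine gap is in your Step 2, and you have named it without closing it. The ansatz $w^\ep=(u_i-\gd(\ep))\circ H+\ep\,\phi_i^\ep$, with $\phi_i^\ep$ solving the cell problem $-b\cd D\phi_i^\ep=G(x,u_i'(H)DH)-\ol G_i(H,u_i')$ orbit by orbit, degenerates exactly where the theorem is nontrivial. First, $u_i$ is only Lipschitz with $|u_i'(h)|\lesssim T_i(h)/L_i(h)$ by \eqref{unif-cont}, which is unbounded as $h\to 0$, so the quantity being averaged need not stay bounded near the node; Lemma \ref{key} is stated and proved only for the zero-gradient datum $G(\cd,0)-G(0,0)$ and cannot simply be ``applied to $G(\cd,u_i'DH)-\ol G_i(H,u_i')$'' without new estimates. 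Second, the corrector on $c_i(h)$ has sup norm of order $T_i(h)$ times the oscillation, and $T_i\in L^1(J_i)$ controls an integral in $h$, not a supremum, so it does not by itself yield $\ep\,\phi_i^\ep\to 0$ uniformly. Third, nothing in your construction forces the $N$ branch correctors to agree on the separatrix $c_0(0)$. The move that would actually close this --- and toward which your own citations of Lemma \ref{u-G0} and the slack in $\cD_0$ point --- is to abandon the gradient corrector in a neighborhood $\gO(\gd)$ of the node and use there the constant $d-\gd$ plus the zero-gradient corrector $\ep\psi$ of Lemma \ref{key}, so that $\gl(d-\gd)-b\cd D\psi+G(x,\ep D\psi)<0$ by Lemma \ref{u-G0} and the strictness supplied by $\cD_0$, and then to glue with the branch subsolutions by taking minima (legitimate here since the Hamiltonian is convex in $p$ and the functions are Lipschitz) on the overlap where $T_i$ is bounded and the classical corrector is harmless. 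As written, your proposal asserts the conclusion of this step rather than performing it, so it is an outline of the right proof rather than a proof.
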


\subsection*{Acknowledgments}

The author would like to thank Prof. Hitoshi Ishii
for many helpful comments and discussions about
the asymptotic problem for Hamilton-Jacobi equations treated here.

\subsection*{Funding}

The research has been supported by a Waseda University Grand for Special Research Projects: 2017S-070.

\begin{bibdiv}
\begin{biblist}
\bib{ACCT}{article}{
   author={Achdou, Yves},
   author={Camilli, Fabio},
   author={Cutr{\`{\i}}, Alessandra},
   author={Tchou, Nicoletta},
   title={Hamilton-Jacobi equations constrained on networks},
   journal={NoDEA Nonlinear Differential Equations Appl.},
   volume={20},
   date={2013},
   number={3},
   pages={413--445},
   issn={1021-9722},
   review={\MR{3057137}},
   doi={10.1007/s00030-012-0158-1},
}

\bib{AT}{article}{
   author={Achdou, Yves},
   author={Tchou, Nicoletta},
   title={Hamilton-Jacobi equations on networks as limits of singularly
   perturbed problems in optimal control: dimension reduction},
   journal={Comm. Partial Differential Equations},
   volume={40},
   date={2015},
   number={4},
   pages={652--693},
   issn={0360-5302},
   review={\MR{3299352}},
   doi={10.1080/03605302.2014.974764},
}

\bib{BCD}{book}{
   author={Bardi, Martino},
   author={Capuzzo-Dolcetta, Italo},
   title={Optimal control and viscosity solutions of Hamilton-Jacobi-Bellman
   equations},
   series={Systems \& Control: Foundations \& Applications},
   note={With appendices by Maurizio Falcone and Pierpaolo Soravia},
   publisher={Birkh\"auser Boston, Inc., Boston, MA},
   date={1997},
   pages={xviii+570},
   isbn={0-8176-3640-4},
   review={\MR{1484411 (99e:49001)}},
   doi={10.1007/978-0-8176-4755-1},
}

\bib{B}{book}{
   author={Barles, Guy},
   title={Solutions de viscosit\'e des \'equations de Hamilton-Jacobi},
   language={French, with French summary},
   series={Math\'ematiques \& Applications (Berlin) [Mathematics \&
   Applications]},
   volume={17},
   publisher={Springer-Verlag, Paris},
   date={1994},
   pages={x+194},
   isbn={3-540-58422-6},
   review={\MR{1613876 (2000b:49054)}},
}

\bib{CIL}{article}{
   author={Crandall, Michael G.},
   author={Ishii, Hitoshi},
   author={Lions, Pierre-Louis},
   title={User's guide to viscosity solutions of second order partial
   differential equations},
   journal={Bull. Amer. Math. Soc. (N.S.)},
   volume={27},
   date={1992},
   number={1},
   pages={1--67},
   issn={0273-0979},
   review={\MR{1118699 (92j:35050)}},
   doi={10.1090/S0273-0979-1992-00266-5},
}

\bib{E}{article}{
   author={Evans, Lawrence C.},
   title={The perturbed test function method for viscosity solutions of
   nonlinear PDE},
   journal={Proc. Roy. Soc. Edinburgh Sect. A},
   volume={111},
   date={1989},
   number={3-4},
   pages={359--375},
   issn={0308-2105},
   review={\MR{1007533 (91c:35017)}},
   doi={10.1017/S0308210500018631},
}

\bib{F}{article}{
   author={Freidlin, M.},
   title={On stochastic perturbations of dynamical systems with a ``rough''
   symmetry. Hierarchy of Markov chains},
   journal={J. Stat. Phys.},
   volume={157},
   date={2014},
   number={6},
   pages={1031--1045},
   issn={0022-4715},
   review={\MR{3277758}},
   doi={10.1007/s10955-014-1110-z},
}

\bib{FW}{article}{
   author={Freidlin, Mark I.},
   author={Wentzell, Alexander D.},
   title={Random perturbations of Hamiltonian systems},
   journal={Mem. Amer. Math. Soc.},
   volume={109},
   date={1994},
   number={523},
   pages={viii+82},
   issn={0065-9266},
   review={\MR{1201269 (94j:35064)}},
   doi={10.1090/memo/0523},
}

\bib{IMZ}{article}{
   author={Imbert, Cyril},
   author={Monneau, R{\'e}gis},
   author={Zidani, Hasnaa},
   title={A Hamilton-Jacobi approach to junction problems and application to
   traffic flows},
   journal={ESAIM Control Optim. Calc. Var.},
   volume={19},
   date={2013},
   number={1},
   pages={129--166},
   issn={1292-8119},
   review={\MR{3023064}},
   doi={10.1051/cocv/2012002},
}

\bib{I}{article}{
   author={Ishii, Hitoshi},
   title={A boundary value problem of the Dirichlet type for Hamilton-Jacobi
   equations},
   journal={Ann. Scuola Norm. Sup. Pisa Cl. Sci. (4)},
   volume={16},
   date={1989},
   number={1},
   pages={105--135},
   issn={0391-173X},
   review={\MR{1056130}},
}

\bib{IS}{article}{
   author={Ishii, Hitoshi},
   author={Souganidis, Panagiotis E.},
   title={A pde approach to small stochastic perturbations of Hamiltonian
   flows},
   journal={J. Differential Equations},
   volume={252},
   date={2012},
   number={2},
   pages={1748--1775},
   issn={0022-0396},
   review={\MR{2853559}},
   doi={10.1016/j.jde.2011.08.036},
}

\bib{K}{article}{
   author={T. Kumagai},
   title={A perturbation problem involving singular perturbations of domains for Hamilton-Jaocbi equations},
   status={to appear},
   jounal={ArXiv e-prints},
   eprint={https://arxiv.org/abs/1607.02224}
}

\bib{L}{book}{
   author={Lions, Pierre-Louis},
   title={Generalized solutions of Hamilton-Jacobi equations},
   series={Research Notes in Mathematics},
   volume={69},
   publisher={Pitman (Advanced Publishing Program), Boston, Mass.-London},
   date={1982},
   pages={iv+317},
   isbn={0-273-08556-5},
   review={\MR{667669 (84a:49038)}},
}

\bib{LS}{article}{
   author={Lions, Pierre-Louis},
   author={Souganidis, Panagiotis},
   title={Viscosity solutions for junctions: well posedness and stability},
   journal={Atti Accad. Naz. Lincei Rend. Lincei Mat. Appl.},
   volume={27},
   date={2016},
   number={4},
   pages={535--545},
   issn={1120-6330},
   review={\MR{3556345}},
   doi={10.4171/RLM/747},
}

\bib{S}{article}{
   author={Sowers, Richard B.},
   title={Stochastic averaging near a homoclinic orbit with multiplicative
   noise},
   journal={Stoch. Dyn.},
   volume={3},
   date={2003},
   number={3},
   pages={299--391},
   issn={0219-4937},
   review={\MR{2017030}},
   doi={10.1142/S0219493703000759},   
}   
\end{biblist}
\end{bibdiv}

(T. Kumagai) 
Department of Mathematics,
Faculty of Education and Integrated Arts and Sciences, Waseda University,
1-6-1 Nishi-Waseda, Shinjuku-ku, Tokyo, 169-8050, Japan

E-mail: t.kumagai@aoni.waseda.jp

\end{document}